\begin{document}

% If your paper is accepted and the title of your paper is very long,
% the style will print as headings an error message. Use the following
% command to supply a shorter title of your paper so that it can be
% used as headings.
%
%\runningtitle{I use this title instead because the last one was very long}

% If your paper is accepted and the number of authors is large, the
% style will print as headings an error message. Use the following
% command to supply a shorter version of the authors names so that
% they can be used as headings (for example, use only the surnames)
%
%\runningauthor{Surname 1, Surname 2, Surname 3, ...., Surname n}

\twocolumn[

\aistatstitle{Model Consistency for Learning with Mirror-Stratifiable Regularizers}

%\aistatsauthor{ Author 1 \And Author 2 \And  Author 3 }

%\aistatsaddress{ Institution 1 \And  Institution 2 \And Institution 3 }

\aistatsauthor{ Jalal Fadili \And Guillaume Garrigos  \And  J\'er\^ome Malick  \And Gabriel Peyr\'e }

\aistatsaddress{ Normandie Universit\'e \And  Unversité Paris-Diderot \And CNRS and LJK, Grenoble  \And CNRS and ENS Paris} 
]

\begin{abstract}
  % !TEX root = ../stratificationAISTATS.tex

Low-complexity non-smooth convex regularizers are routinely used to impose some structure (such as sparsity or low-rank) on the coefficients for linear predictors in supervised learning. 
Model consistency consists then in selecting the correct structure (for instance support or rank) by regularized empirical risk minimization. 
It is known that model consistency holds under appropriate non-degeneracy conditions. However such conditions typically fail for highly correlated designs and it is observed that regularization methods tend to select larger models.
In this work, we provide the theoretical underpinning of this behavior using the notion of mirror-stratifiable regularizers. 
This class of regularizers encompasses the most well-known in the literature, including the $\ell_1$ or trace norms. 
It brings into play a pair of primal-dual models, which in turn allows one to locate the structure of the solution using a specific dual certificate.
We also show how this analysis is applicable to optimal solutions of  the learning problem, and also to the iterates computed by a certain class of stochastic proximal-gradient algorithms. 

\end{abstract}

% !TEX root = ../stratificationAISTATS.tex

\section{Introduction}
\label{sec-intro}

%%%

\paragraph{Regularized empirical risk minimization.}

We consider a general set-up for supervised learning where, given an input/output space $\Xx \times \Yy$ endowed with a probability measure $\rho$, one wants to learn an estimator $f : \Xx \rightarrow \Yy$ satisfying $f(x) \approx y$ for $\rho$-a.e. pair of data $(x,y) \in \Xx \times \Yy$.
We restrict ourselves to the case where $\Xx \times \Yy = \RR^p \times \RR$, with $p$ being the dimension of the feature space, and we search for an estimator that is linear in $\Xx$, meaning that $f$ can be written $f_{w_0}(x)= \dotp{x}{\weight_0}$ for some coefficient vector $\weight_0 \in \RR^p$.
A standard \textit{modeling assumption} is that, among the minimizers of a quadratic expected risk, $w_0$ possesses some form of simplicity or low-complexity (e.g. sparsity or low-rank).
In other words, $w_0$ is assumed to be the unique solution~of
\begin{equation}\label{Eq:primal expected problem}
%w_0 \in \uArgmin{w \in \RR^p} \enscond{R(w)}{w \in \uArgmin{w' \in \RR^p} \EE_\rho\left[(\dotp{w'}{\xp} - \yp)^2\right]}, \tag{$\text{P}_0$}
\min_{w \in \RR^p} \!\enscond{\!R(w)\!\!}{\!\!w \in \uArgmin{w' \in \RR^p} \EE_\rho\!\left[(\dotp{w'}{\xp} - \yp)^2\right]\!} \tag{$\text{P}_0$}
\end{equation}
where $R : \RR^p\rightarrow \RR \cup \{+\infty\}$ is a proper lower semi-continuous (l.s.c.) convex regularizer, and $\EE_\rho\left[{\cdot}\right]$ is the expectation of the random variable $(\xp,\yp)$ w.r.t. the probability measure $\rho$.

%The above problem  can be simply rewritten as a linear equation
%\begin{equation*}
%\text{find } \ w_0 \ \text{ such that } Cw=u, \ w \in \RR^p,
%\end{equation*}
%where $C \eqdef \EE_{\xp}(\xp\xp^\top) \in \RR^{p \times p}$ and $u\eqdef \EE_{\xp,\yp}(\yp\xp) \in \RR^{p}$.
%We highlight the fact that we do no assume here that the (covariance?) matrix $C$ is nonsingular, so the equation $Cw=u$ might be underdeterminated.
%This is why we make a second modeling assumption, which is assuming that $w_0$ enjoys some form of simplicity or low-complexity (e.g. sparsity or low-rank).
%This can be traduced by the fact that it is the unique solution of
%\begin{equation}\label{Eq:primal expected problem}
%w_0 \in \uargmin{w \in \RR^p} \left\{ R(w) : Cw=u \right\}, \tag{$\text{P}_0$}
%\end{equation}
%where $R : \RR^p\rightarrow \RR \cup \{+\infty\}$ is a proper lower semi-continuous convex regularizer.

%
%observes $n$ pairs $(x_i,y_i)_{i=1}^n \subset \RR^p \times \RR$, where $p$ is the dimension of the features space. 
%We assume that the data is drawn from a probability measure $\rho$, and that the sampling is independent and identically distributed.
In practice \eqref{Eq:primal expected problem} cannot be solved directly because one does not have access to $\rho$;
 only a sequence of $n$ independent and identically distributed (i.i.d.) pairs $(x_i,y_i)_{i=1}^n$ sampled from $\rho$ is available.
The conventional approach is then to consider a solution $\widehat \weight_{\lambda,n}$ of a penalized empirical risk minimization (ERM) of the form
\eql{\label{eq-empirical-risk-bis}
\min_{\weight \in \RR^p} \lambda R(\weight) + \!\frac{1}{2 n}\sum_{i=1}^n \left(\dotp{x_i}{\weight}-y_i \right)^2\!.
	\tag{$\text{P}_{\lambda,n}$} 
}
The regularization parameter  $\la>0$ is tuned as a (decreasing) function of $n$, balancing appropriately between fitting the data and inducing some desirable property promoted by the regularizer $R$.

%Here $\la>0$ is a regularization parameter, which should be tuned as a (decreasing) function of $n$ in order to balance appropriately between fitting the data and inducing some desirable property promoted by the regularizer $R$. To avoid trivialities, in the sequel, we assume that \eqref{eq-empirical-risk-bis} has at least one minimizer.

%To avoid trivialities, in the sequel, we assume that \eqref{eq-empirical-risk-bis} has at least one minimizer.

%We conveniently rewrite~\eqref{eq-empirical-risk} in an equivalent form 
%\eql{\label{eq-empirical-risk-bis}
%	\widehat \weight_{\lambda,n} \in \uargmin{\weight \in \RR^p} 
%\lambda R(\weight) +	\frac{1}{2}\dotp{\widehat C_n \weight}{\weight} - \dotp{\widehat u_n}{\weight} \tag{$\text{P}_{\lambda,n}$}
%}
%where $\widehat C_n \eqdef \frac{1}{n}\sum_i x_ix_i^\top \in \RR^{p \times p}$ denotes the empirical covariance  and $\widehat u_n \eqdef \frac{1}{n}\sum_i y_i x_i \in \RR^p$ correspond to correlated observations. 
%{\color{gray}We focus here for simplicity on the case of regression with a square Euclidean loss, but the theory can be extended to more general strictly convex smooth loss, such as for instance the logistic loss for classification.}

%%%%%%%
\paragraph{Tracking the structure of the solution.}

A theoretical question in statistical learning is to understand how close a solution $\widehat \weight_{\lambda,n}$ of~\eqref{eq-empirical-risk-bis} comes to $w_0$. 
If $\widehat \weight_{\lambda_n,n} \rightarrow \weight_0$ (convergence being usually considered in probability) as $n \rightarrow +\infty$ with $\lambda_n \rightarrow 0$, then the estimator is said to be \emph{consistent}. One is also generally interested in stating estimation rates, and a linear estimation rate corresponds to $\norm{\widehat \weight_{\lambda_n,n} - \weight_0} \sim n^{-\frac{1}{2}}$ (to be understood in probability). 
Note that we are here discussing guarantees on the estimation risk and not on the prediction risk (i.e. on $\weight$ and not on $f_w(x)=\dotp{w}{x}$), which is more challenging. 
%Even more challenging is the question of making precise statements about the structure of $\widehat \weight_{\lambda_n,n}$; this is the topic of this work. 
%
In this paper, we investigate \emph{model consistency}, that is, whether $\weight_{\lambda_n,n}$ and $w_0$ share the same structure for appropriately chosen $\lambda_n$ and $n$ large enough. 
%Existing results on this questions rely on the hypothesis that the strata $M$ are smooth manifolds and that $R$ is itself smooth along each stratum $M$. 
Existing results on the subject heavily rely on a non-degeneracy condition at $w_0$, which is often referred as an ``irrepresentable condition'' (see more details and references in Remark~\ref{rem-irrepresentable-cond}). In this case, one can show that for $n$ large enough and $\la_n \sim n^{-1/2}$, model consistency holds; see for $\ell_1$~\citep{zhao2006model}, $\ell_1$-$\ell_2$~\citep{bach2008group}, nuclear norm~\citep{bach2008trace} and more generally for the class of partly-smooth functions~\citep{2014-vaiter-ps-consistency}. 
The first goal of this paper is to go one step further by formally analyzing the general and challenging case where the non-degeneracy assumption cannot be guaranteed.

\paragraph{Tracking the structure of proximal algorithms.}
Similar consistency questions arise for the approximations of solutions computed by stochastic proximal algorithms used to solve \eqref{eq-empirical-risk-bis}.
Many non-smooth low-complexity structure-promoting regularizers are such that their proximal operator is easy to compute either explicitly (as for the $\ell_1$ norm or the trace norm) or  approximately to good precision (as for the total variation in one-dimension). Proximal-gradient algorithms are then the methods of choice for solving the structured optimization problem \eqref{eq-empirical-risk-bis}. For large-scale machine learning problems, 
%these methods require to compute the full batch gradient, which is expensive when the number of data $n$ is large. In this case, 
one would typically prefer stochastic versions of these algorithms, which need only one observation to proceed with the iterate; see e.g. \citep{DefBacLac14,XiaZha14}. The second goal is then to understand 
%how close from $w_0$ can be the iterates of such a stochastic algorithms, and 
if these iterates and $w_0$ share the same structure induced by $R$. This complements the existing convergence analysis of these algorithms; pointers to relevant literature are given in Section~\ref{sec-main-results}.

\paragraph{Paper organization.}
As explained above, this paper has two goals about general model consistency for (i) regularized learning models and (ii) stochastic algorithms for solving  them. The low-complexity induced by popular regularizers reveals primal-dual partitions which  allow us to localize optimal solutions and track iterates. Section~\ref{S:mirror-strat} recalls the notion of mirror-stratifiable regularizers which provides this structural complexity partition. %(in fact stratification). 
Then Section~\ref{sec-main-results} states our model recovery results and discusses their originality with respect to the existing literature. The rationale and the milestones of the proofs are sketched in Section~\ref{S:sketch proof}; details and technical results are established in the supplementary material. Finally Section~\ref{sec-numerics} provides numerical illustrations of our results, giving theoretical justification of typical observed behaviors of stochastic algorithms.

\section{Low-complexity models}\label{S:mirror-strat}

%%%%%%%
\paragraph{Low-complexity and stratification.}

In this paper, we study model consistency for a large class of regularizers, and under few structural assumptions. 
Our results strongly rely on duality arguments, and on a structure induced by $\partial R$ (where $\partial R$ is the subdifferential of $R$).
To track the structure of solutions,  we introduce an appropriate {\emph{stratification}}  
 $\Strat = \{M_i\}_{i\in I}$ of $\dom (\partial R) \subset \RR^p$ (where $\dom(\partial R) \eqdef \enscond{w \in \RR^p}{\partial R(w) \neq \emptyset}$), which
 is a finite partition such that for any strata  $\Man$ and $\Man'$ 
 \begin{equation*}
 ~\Man \cap \cl(\Man') \neq \emptyset ~\Rightarrow \Man \subset \cl(\Man')
\end{equation*}
(where $\cl $ stands for the topological closure of the set).
Because this is a partition, any element $w \in \dom(\partial R)$ belongs to a unique stratum, which we denote $M_w$.
A stratification also induces a partial ordering $\leq$ as follows
\eql{\label{eq-order-strata}
	\Man\!\leq\!\Man' 
	\Longleftrightarrow
	\Man\!\subset\!\cl(\Man')
	\Longleftrightarrow
	\Man\!\cap\!\cl(\Man')\!\neq\!\emptyset .
}
With such ordering, it is natural to see some strata as being ``smaller'' than others, and, by extension, to say that the elements of such small strata have a low-complexity.

\begin{example}\label{Ex:strata}
Most regularizers $R$ used in machine learning naturally come up with  a stratification, in the sense that they promote solutions belonging to small (for the relation $\leq$) strata $M$.
\begin{itemize}
	\item Lasso~\citep{tibshirani1996regre}: the simplest example is the $\ell_1$ norm where $R(w)=\sum_i |w_i|$, where the strata are the sets of vectors $M_I=\enscond{w \in \RR^p}{\supp(w)=I}$, where $I \subset \ens{1,\cdots,p}$.
	\item Nuclear (a.k.a. trace) norm~\citep{fazel2002matrix}: this is another popular example where $R(w)$ is the $\ell_1$ norm of the singular values of $w$, and where the strata are the manifolds of fixed-rank matrices: $M_r=\enscond{w \in \RR^{p_1 \times p_2}}{\rank(w)=r}$, where $r \in \ens{0,\cdots,\min(p_1,p_2)}$.
	\item Many other examples fall within this class of regularizers. For instance the $\ell_1$-$\ell_2$-norm to promote group-sparsity~\citep{yuan2005model}, or the fused Lasso~\citep{tibshirani2005sparsity}. 
Yet another example is the total variation semi-norm $R(w)=\norm{D w}_1$ where $D$ is a discrete approximation to the ``gradient'' operator (on a regular grid or on a graph); in this case, the strata are defined by piecewise constant vectors sharing the same jump set (edges in signals or images).
\end{itemize}
\end{example}

\paragraph{Mirror-Stratifiable Regularizers.}

All the classical regularizers mentioned in Example~\ref{Ex:strata} have moreover a strong relation between their primal and dual stratifications. 
These primal-dual relations are defined through the following correspondence operator $\Jj_R$ between  subsets $S \subset \RR^p$,  
\eq{
	\Jj_R(S) \eqdef \bigcup_{x \in S} \ri(\partial R(x)),
}
where $\ri$ denotes the relative interior of a convex set.
% Let $R\colon\RR^p\to\RR$ be a proper lsc convex function;
Following~\cite{FadMalPey17}, we define mirror-stratifiabilty as follows.
\begin{defn}\label{D:mirror-strat}
Let $R$ be a proper lsc and convex function and $R^*$ its Legendre-Fenchel conjugate. $R$ is mirror-stratifiable with respect to a (primal) stratification $\Strat=\{\Man_i\}_{i\in I}$ of $\dom(\partial R)$ and a (dual) stratification $\Strat^*=\{\Man^*_i\}_{i\in I}$ of $\dom(\partial R^*)$ if $\Jj_R: \Strat \to \Strat^*$ is invertible with inverse $\Jj_{R^*}$ and $\Jj_R$ is decreasing for the relation $\leq$ defined by \eqref{eq-order-strata}.
\end{defn}
%the following holds:
%\begin{enumerate}[label=(\roman*)]
%\item $\Jj_R$ is invertible with inverse $\Jj_{R^*}$,
%% i.e. $\forall (\Man,\Man^*) \in \Strat \times \Strat^*$, $\Man^* = \Jj_R(\Man) \iff \Jj_{R^*}(\Man^*) = \Man$. 
%\item $\Jj_R$ is decreasing for the relation $\leq$.
%%: for any $\Man$ and $\Man'$ in $\Strat$ $\Man \leq \Man' \iff \Jj_R(\Man) \geq \Jj_R(\Man')$.
%\end{enumerate}
This structure finds its roots in~\citep{daniilidis2013orthogonal}, which introduces the tools to show that polyhedral functions, as well as spectral lifting of polyhedral functions, are mirror-stratifiable. In particular, all popular regularizers mentioned above ($\ell_1$ norm, $\ell_1$-$\ell_2$ mixed norms, nuclear norm, total variation semi-norm) are mirror-stratifiable; see~\citep{FadMalPey17}.

\begin{example}\label{Ex:mirror}
Let us illustrate this notion in the case $R=\Vert \cdot \Vert_1$.
As mentioned in Example~\ref{Ex:strata}, the strata $M_I$ of $\dom(\partial R) = \RR^p$ are sets of sparse vectors, with prescribed support.
In the dual, $\dom(\partial R)^*$ is the unit $\ell_\infty$-ball, which can be naturally stratified by sets of vectors in $[-1,1]^p$ with a prescribed active set.
More precisely, if we define 
\begin{equation*}
\Active(\eta) \eqdef \enscond{i \in \ens{1,\cdots,p}}{ \vert \eta_i \vert = 1},
\end{equation*}
 then these strata are of the form $M^*_I=\enscond{\eta \in [-1,1]^p}{\Active(\eta) = I}$.
It is then an easy exercise to verify that the following correspondence operators $\Jj_R$ and $\Jj_{R^*}$ induce a decreasing bijection between the dual strata $M^*_I$ and the primal strata $M_I$, meaning that:
\begin{align*}
(\forall I,J \subset \ens{1,\cdots,p}) \quad &\Jj_R(M_I) = M^*_I, \  \Jj_{R^*}(M^*_I)=M_I \\
\text{ and } \ &I\subset J \Leftrightarrow M_I \leq M_J \Leftrightarrow M_I^* \geq M_J^*.
\end{align*}
All the regularizers in Example~\ref{Ex:strata} work in the same way.
For instance, for the nuclear norm, the strata~$M_r$ made of rank-$r$ matrices are in correspondence with strata $M^*_r$ made of matrices having exactly $r$ singular values equal to $1$, and the others being of smaller amplitude.
\end{example}

\section{Main results}\label{sec-main-results}

We study model consistency by bypassing unrealistic assumptions (e.g., irrepresentable-type condition)
%of non-degeneracy (irrespresentable-type condition, see forthcoming Remark \ref{rem-irrepresentable-cond})
and thus obtain flexible theoretical results.
Throughout this paper, we only assume the following hypotheses:
\begin{equation}\label{H:hypothesis on model}
\tag{$\text{H}_\text{M}$}\begin{cases}
\text{$R$ is mirror-stratifiable}, \\
\text{$R$ is bounded from below}, \\
\text{$w_0$ is the unique solution of~\eqref{Eq:primal expected problem}}.\\
%\text{$\ker R^\infty \cap \ker C = \ens{0}$,} 
\end{cases}
\end{equation}
%{\color{blue}\scriptsize Check if "bounded on bounded sets of its domain" is for free when $R$ is convex and its domain is closed. More generally assumptions must be discussed here, like uniqueness of the solution.}
%where $R^\infty$ is the asymptotic or recession function of $R$; see~\cite[Definition 10.32]{BauComv2}. Observe that since $R$ is bounded below, then by \cite[Exercise~10.18(iv)]{BauComv2}, the third assumption equivalently reads $R^\infty(\delta) > 0$ for all $\delta \in \ker C \setminus \ens{0}$. This assumption is very mild since it just requires (roughly speaking) $R$ to be coercive in all non-zero directions in $\Ker C$, which trivially holds for instance if $R$ is coercive or $C$ is definite positive.
Under~\eqref{H:hypothesis on model}, we establish general model consistency results of optimal solutions of the regularized ERM problem~\eqref{eq-empirical-risk-bis} (in Section\;\ref{sec-consistency-1}), and of iterates of stochastic proximal algorithms to solve it (in Section\;\ref{sec-consistency-2}). 
We also discuss how these results encompass the existing model consistency results (in Section\;\ref{sec-relation}).

Our analysis leverages the strong primal-dual structure of mirror-stratifiable regularizers, which is our key tool to localize the active strata at the solution of~\eqref{eq-empirical-risk-bis}, even in the case where the irrepresentable condition is violated. 
We  show  that an enlarged model consistency holds, where the identified structure lies between the ideal one (the structure of $w_0$) and a worst-case one controlled by 
%We also note here results will be stated in terms of 
a particular dual element (the so-called dual vector/certificate) 
\[
\eta_0\in\partial R(w_0),
\] 
defined as the optimal solution\footnote{Though we do not assume $C$ to be invertible, $\eta_0$ is indeed unique since $\Ker C^\dagger = \Im C^\perp$. In the case where  $C$ is invertible, $\eta_0$ coincides with the element of $\partial R(w_0)$ having minimal norm, in the metric induced by\;$C^{-1}$.}
\begin{equation}\label{E:dual expected pb}
%\min_{\eta \in \RR^p}\enscond{\dotp{C^\dagger \eta}{\eta}\!\!}{\!\!\eta  \in \partial R(w_0) \cap \Im C}.% \in \dom(\partial R)^*.
\eta_0=\Argmin\enscond{\dotp{C^\dagger \eta}{\eta}\!\!}{\!\!\eta  \in \partial R(w_0) \cap \Im C}
\tag{$\text{D}_\text{0}$}
\end{equation} 
where 
%In the light of the notions introduced in Section \ref{S:mirror-strat}, we can rephrase this by saying that  the identified structure belongs to a strata which is greater than $M_{w_0}$ but smaller than $\Jj_{R^*}(M^*_{\eta_0})$. 
%Here 
$C \eqdef \EE_\rho \left[\xp \xp^\top\right] \in \RR^{p \times p}$ is the expected (non-centered) covariance matrix, and $C^\dagger$ denotes its Moore-Penrose pseudo-inverse. The role of $\eta_0$ in sensitivity analysis of regularized ERM problems is well-known, but has been always done under a non-degeneracy assumption (see forthcoming discussions in Remark\;\ref{rem-irrepresentable-cond} and Section~\ref{sec-relation}).

%%%%%%%%%%%%%%%%%%%%%%%%%%%%%%%%%%%%%%%%%%%%%%%%%%%
%\paragraph{Hypotheses on the model.}
%We summarize here the hypotheses we do, all along this paper, on our model:

\subsection{Model consistency for regularized ERM}\label{sec-consistency-1}
Our first contribution, Theorem~\ref{T:identification problem proba} below, states that for an appropriate regime of $(\lambda_n,n)$, one can precisely localize with probability $1$ the active stratum at $\widehat\weight_{\lambda_n,n}$ between a minimal active set associated to $\weight_0$ and a maximal one controlled by the dual vector $\eta_0$. 
In the special case of $\ell_1$ minimization, this means that, almost surely, the support of $\widehat\weight_{\lambda_n,n}$ can be larger than that of $\weight_0$ but cannot be larger than the extended support characterized by $\Active(\eta_0)$. This holds provided that $\lambda_n$ decreases to $0$ with\;$n$, but not too fast to account for errors stemming from the finite sampling.
%empirical estimation of the actual covariance $C$ and correlations $\EE_\rho\left[\xp\yp\right]$. 

\begin{thm}%[Probabilistic approximate model consistency]
\label{T:identification problem proba}
Assume that \eqref{H:hypothesis on model} holds, and suppose that $\EE_\rho\left[\Vert \xp \Vert^4\right] <+\infty$ and $\EE_\rho\left[\vert \yp \vert^4\right] < + \infty$.
Let $(\lambda_n)_{n\in\NN} \subset ]0,+\infty[$ be such that 
\[
\lambda_n \to 0 \text{\quad with\quad $\lambda_n \sqrt{n/(\log \log n)} \to +\infty$}.
\]
Then, for $n$ large enough, the following holds with probability $1$: 
\vspace*{-2ex}
\begin{equation}\label{Eq:sandwich strata}
 M_{\weight_0} \leq M_{\widehat \weight_{\lambda_n,n}} \leq \Jj_{R^*}(M_{\eta_0}^*).
\end{equation}
\vspace*{-1.5ex}
\end{thm}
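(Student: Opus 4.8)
The proof rests on a purely structural localization principle for mirror-stratifiable regularizers, which I would isolate first. \emph{Claim:} if $w_k \to w_\infty$ and $\eta_k \in \partial R(w_k)$ with $\eta_k \to \eta_\infty$, then for $k$ large enough
\[
M_{w_\infty} \leq M_{w_k} \leq \Jj_{R^*}(M_{\eta_\infty}^*).
\]
The left inequality is the lower semicontinuity of the stratum map $w \mapsto M_w$ (valid because $\Strat$ is finite): a limit point falls into a stratum no larger than those of the tail. For the right inequality, note that $\partial R$ has closed graph, so $\eta_\infty \in \partial R(w_\infty)$; the same dual semicontinuity gives $M_{\eta_\infty}^* \leq M_{\eta_k}^*$ for $k$ large, mirror-stratifiability yields the pointwise bound $M_{w_k} \leq \Jj_{R^*}(M_{\eta_k}^*)$ (with equality when $\eta_k \in \ri\,\partial R(w_k)$), and since $\Jj_{R^*}$ is decreasing for $\leq$ we conclude $M_{w_k} \leq \Jj_{R^*}(M_{\eta_k}^*) \leq \Jj_{R^*}(M_{\eta_\infty}^*)$.

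With this in hand, the theorem reduces to producing a dual variable and proving that the primal--dual pair converges almost surely to $(w_0,\eta_0)$. The optimality conditions of \eqref{eq-empirical-risk-bis} read $0 \in \lambda_n \partial R(\widehat\weight_{\lambda_n,n}) + \widehat C_n \widehat\weight_{\lambda_n,n} - \widehat b_n$, where $\widehat C_n = \frac{1}{n}\sum_{i=1}^n x_i x_i^\top$ and $\widehat b_n = \frac{1}{n}\sum_{i=1}^n y_i x_i$, so the natural certificate is $\widehat\eta_{\lambda_n,n} \eqdef \frac{1}{\lambda_n}(\widehat b_n - \widehat C_n \widehat\weight_{\lambda_n,n}) \in \partial R(\widehat\weight_{\lambda_n,n})$. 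I would then split the analysis through the \emph{population-regularized} problem: let $w_\lambda$ be the unique minimizer of $w \mapsto \lambda R(w) + \frac{1}{2}\dotp{C w}{w} - \dotp{b}{w}$, with $b \eqdef \EE_\rho[\yp\xp]$, and set $\eta_\lambda \eqdef \lambda^{-1}(b - C w_\lambda) \in \partial R(w_\lambda)$. Since the population risk is minimized exactly on $\enscond{w}{Cw = b}$ and $w_0$ is its $R$-minimal element, a parametric minimization (epi-convergence) argument using that $R$ is bounded from below gives $w_\lambda \to w_0$; a finer sensitivity analysis identifies the limit of $\eta_\lambda$ as precisely the minimal $C^\dagger$-norm certificate $\eta_0$ solving \eqref{E:dual expected pb}.

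It remains to control the statistical error $(\widehat\weight_{\lambda_n,n}, \widehat\eta_{\lambda_n,n}) - (w_{\lambda_n}, \eta_{\lambda_n})$. The strong law gives $\widehat C_n \to C$ and $\widehat b_n \to b$ almost surely, and the moment hypotheses $\EE_\rho[\Vert\xp\Vert^4]<\infty$, $\EE_\rho[|\yp|^4]<\infty$ make the entries of $\widehat C_n - C$ and $\widehat b_n - b$ satisfy the law of the iterated logarithm, whence $\Vert\widehat C_n - C\Vert$ and $\Vert\widehat b_n - b\Vert$ are $O(\sqrt{\log\log n / n})$ almost surely. Writing
\[
\widehat\eta_{\lambda_n,n} - \eta_{\lambda_n} = \frac{1}{\lambda_n}\big[(\widehat b_n - b) - (\widehat C_n - C)\widehat\weight_{\lambda_n,n} - C(\widehat\weight_{\lambda_n,n} - w_{\lambda_n})\big]
\]
and bounding each term, the scaling assumption $\lambda_n\sqrt{n/(\log\log n)}\to+\infty$, i.e. $\lambda_n^{-1}\sqrt{\log\log n/n}\to 0$, forces these contributions to vanish. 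Combined with $w_{\lambda_n}\to w_0$, $\eta_{\lambda_n}\to\eta_0$ and a diagonal argument in $(\lambda_n,n)$, this yields $(\widehat\weight_{\lambda_n,n}, \widehat\eta_{\lambda_n,n}) \to (w_0, \eta_0)$ almost surely, and the localization claim closes the proof.

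The main obstacle is the dual convergence. Identifying the limit of $\eta_\lambda$ as exactly $\eta_0$ (rather than some other subgradient in $\partial R(w_0)$) is a delicate deterministic sensitivity step, and transferring stability to the dual is awkward when $C$ is singular: the objective is strongly convex only on $\Im C$, so one controls not $\widehat\weight_{\lambda_n,n} - w_{\lambda_n}$ itself but only its image under $C$, which is exactly what the last term above requires. Making $\lambda_n^{-1} C(\widehat\weight_{\lambda_n,n} - w_{\lambda_n})$ negligible, uniformly enough to survive the joint $(\lambda_n,n)$ limit, is the crux of the argument.
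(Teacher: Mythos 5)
Your structural localization claim is correct (it is a sequential form of Proposition~\ref{T:mirror strat identification}, itself a restatement of Theorem~1 of \citep{FadMalPey17}, including the pointwise bound $M_w \leq \Jj_{R^*}(M_\eta^*)$ for any $\eta \in \partial R(w)$), and your reduction of the theorem to the almost-sure convergence $(\widehat \weight_{\lambda_n,n},\widehat \eta_{\lambda_n,n}) \to (w_0,\eta_0)$ --- with the same certificate $\widehat\eta_{\lambda_n,n}$, the same split through the population-regularized pair $(w_{\lambda_n},\eta_{\lambda_n})$, and the same law-of-the-iterated-logarithm rate $r_n = O(\sqrt{\log\log n/n}) = o(\lambda_n)$ --- is exactly Proposition~\ref{P:cv problem}. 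The problem is that you stop where the work starts: the two steps you yourself label ``delicate'' and ``the crux'' are asserted, not proved, and they are precisely the content of the paper's technical lemmas. For $\lambda_n^{-1}\norm{C(\widehat w_{\lambda_n,n}-w_{\lambda_n})} \to 0$, the paper (Lemma~\ref{L:estimate primal pb}) subtracts the two optimality conditions, uses monotonicity of $\partial R$, and passes to the $C^\dagger$-seminorm to get $\norm{C(\widehat w_{\lambda,n}-w_\lambda)} \leq (\norm{C}\norm{C^\dagger})^{1/2}(1+\norm{\widehat w_{\lambda,n}})r_n$; the essential point is that this bound carries no $1/\lambda$ factor, so $r_n = o(\lambda_n)$ suffices --- and the argument also requires $\Im \widehat C_n \subset \Im C$ almost surely, a fact that itself needs proof (Lemma~\ref{L:properties of C_n in stochastic}). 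For $\eta_\lambda \to \eta_0$, the paper first characterizes $\partial R(w_0) \cap \Im C$ as the solution set of the Fenchel dual of \eqref{Eq:primal expected problem} (Lemma~\ref{L:limit solutions belong to partial R}), then combines optimality inequalities, lower semicontinuity of $R^*$, boundedness of $(\eta_\lambda)_{\lambda>0}$, and uniqueness of the $C^\dagger$-minimal-norm point of \eqref{E:dual expected pb}; invoking ``a finer sensitivity analysis'' is not a substitute for this argument.

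Two further gaps. First, boundedness of $(\widehat w_{\lambda_n,n})_{n\in\NN}$, which you need both for the term $\lambda_n^{-1}(\widehat C_n - C)\widehat w_{\lambda_n,n}$ and for any compactness argument, is never addressed; since $C$ may be singular and $R$ need not be coercive, this is nontrivial --- the paper proves it by a recession-function argument (Lemma~\ref{L:boundedness primal pb}) exploiting both that $R$ is bounded below and that \eqref{Eq:primal expected problem} has the unique solution $w_0$, which yields $\Ker R^\infty \cap \Ker C = \{0\}$ and hence level-boundedness. Second, and more seriously, your route to the \emph{primal} convergence $\widehat w_{\lambda_n,n} \to w_0$ fails as designed: you go through $w_{\lambda_n} \to w_0$ plus statistical-error control, but, as you note yourself, the objective is strongly convex only along $\Im C$, so comparing the empirical and population problems controls only $C(\widehat w_{\lambda_n,n}-w_{\lambda_n})$ and never $\norm{\widehat w_{\lambda_n,n}-w_{\lambda_n}}$; when $\Ker C \neq \{0\}$ this cannot give convergence of the primal variable. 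The paper does not attempt this route: it proves $\widehat w_{\lambda_n,n} \to w_0$ directly by a cluster-point argument --- boundedness, then the optimality inequality in the limit gives $\dotp{C(w^\star-w_0)}{w^\star-w_0}=0$, hence $Cw^\star = u$, while lower semicontinuity of $R$ gives $R(w^\star)\leq R(w_0)$, and uniqueness in \eqref{Eq:primal expected problem} forces $w^\star = w_0$. You need this (or an equivalent) argument; your decomposition alone cannot deliver the primal limit.
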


\begin{example}\label{Ex:mirror-strat dual certificate}
Using the notations of Example \ref{Ex:strata} and \ref{Ex:mirror}, the enlarged consistency \eqref{Eq:sandwich strata} specializes to
\begin{align*}
&\supp(w_0) \subset \supp(\widehat \weight_{\lambda_n,n}) \subset \Active(\eta_0),\\
&\rank(w_0) \leq \rank(\widehat \weight_{\lambda_n,n}) \leq \#\enscond{s \in \sigma(\eta_0)}{\vert s \vert=1},
\end{align*}
for the $\ell_1$ norm and the nuclear norm, respectively. $\sigma(\eta_0)$ denotes the vector of singular values of $\eta_0$.
\end{example}

The theorem guarantees that we have an enlarged model consistency, as soon as enough data is sampled.
%the solutions of the regularized problems are localized almost surely between two extreme intrinsic strata \eqref{Eq:sandwich strata}. 
The first interest of this result is the finite identification, compared to the existing asymptotic results (even if the level of generality does not allow us to provide a bound on $n$); we discuss this in Section\;\ref{sec-relation}. The second and main advantage of our result is that it does not require any unrealistic non-degeneracy assumption. We explain this point in the next two remarks, by looking at the usual assumption and how it often fails to hold in high dimension.

\begin{remark}[Irrepresentable condition and exact model consistency]\label{rem-irrepresentable-cond}
	If it is furthermore assumed that 
	\begin{equation}\label{D:irrepresentable condition}
	\eta_0 \in \ri(\partial R(w_0)), \tag{IC}
	\end{equation}
%	$\eta_0 \in \ri(\partial R(w_0))$, 
	then it follows from Definition \ref{D:mirror-strat} that $M_{\weight_0}=\Jj_{R^*}(M_{\eta_0}^*)$.
	In that setting, the consistency \eqref{Eq:sandwich strata} just gives exact model consistency
	\[
	M_{\weight_0}=M_{\widehat \weight_{\lambda_n,n}}.
	\] 
	This relative interiority assumption \eqref{D:irrepresentable condition} corresponds exactly to the ``irrepresentable condition'' which is classical in the learning literature \citep{zhao2006model},\citep{bach2008group},\citep{bach2008trace}.
	Without this non-degeneracy hypothesis, we cannot expect to have  exact model consistency (this is for instance illustrated in Section~\ref{sec-numerics}). The above theorem shows that there is still an approximate optimal model consistency, with two extreme strata  fully characterized by the primal-dual pair $(\weight_0,\eta_0)$. 
	Our result is thus able to explain what is going on in the intricate situation where  \eqref{D:irrepresentable condition} is violated.
	%  
%	\todo{Jalal: Je trouve cette phrase un peu trompeuse car elle ne vient pas juste apres la non-degenerescence. Je suggere de l'enlever.}
%	Let us remark that in this case, and for many classical regularizers, one can give a simpler description of $\eta_0$ than~\eqref{E:dual expected pb} by solving a linear system (involving for instance for $\ell_1$ only $\sign(x_0)$). We refer to~\cite{2014-vaiter-ps-consistency} and references therein for more details. 
\end{remark}

\begin{remark}[When the irrepresentable condition fails]\label{R:compressed sensing}
The originality and interest of our model consistency result is that  condition \eqref{D:irrepresentable condition} is not required to hold, since it is usually not valid in the context of large-scale learning.
Let us give some insights on this condition in the specific case of $\ell_1$-regularized problems.
For instance, if the $x_i$'s are drawn from a standard Gaussian i.i.d. distribution, the compressed sensing literature provides sample thresholds depending on the dimension $p$ and the sparsity level $s = \Vert w_0\Vert_0$. 
In this scenario, it is known that uniqueness in~\eqref{H:hypothesis on model} holds for $n > 2s\log(p/s)$ \citep{AmeLotCoyTro14}, while the irrepresentable condition holds only for $n > 2s\log(p)$ \citep{CanRec13}: the gap between these thresholds corresponds to the case where \eqref{D:irrepresentable condition} fails.
Observe nevertheless that these results rely on the assumption that the features are incoherent (here Gaussian i.i.d.), which is not likely to be verified  in a  learning scenario, where they are typically highly correlated.
A setting with a coherent operator $C$ is that of deconvolution, where $C$ is a (discerete) convolution operator associated to a smooth kernel, which is widely studied in the signal/image processing literature (in particular for the super-resolution). 
In this case, one can exactly determine the largest manifold {$\Jj_{R^*}(M^*)$} involved in \eqref{Eq:sandwich strata}, see \citep{DuvPey17}.
\end{remark}

%%%%%%%%%%%%%%%%%%%%%%%%%%%%%%%%%%%%%%%%%%%%%%%%%%%
\subsection{Model consistency for stochastic proximal-gradient algorithms}\label{sec-consistency-2}

Our second main result describes model consistency for the iterates generated by a stochastic algorithm. 
In our situation, the general (relaxed) stochastic proximal gradient algorithm for solving \eqref{eq-empirical-risk-bis}  
reads, starting from any initialization $\widehat\weight^0$, at iteration $k$:
\begin{equation}\label{algorithm}
\begin{cases}
\widehat d^{k} = (\langle \widehat w^k, x_{i(k)} \rangle -y_{i(k)}) x_{i(k)} + \widehat \varepsilon^k, \\ 
\widehat z^{k}   =  \Prox_{\gamma_k \la R}(
		\widehat w^k - \gamma_k \widehat d^k ), 
		\\
		\widehat w^{k+1}  = (1-\alpha_k) \widehat w^k + \alpha_k \widehat z^{k},
\end{cases}
\tag{$\text{RSPG}$}
\end{equation}
where $(x_{i(k)},y_{i(k)})$ are independent random variables drawn among $(x_i,y_i)_{i=1}^n$, $\gamma_k \in ]0,+\infty[$ and $\alpha_k \in ]0,1]$ are respectively deterministic stepsize and relaxation parameters. 
As it is, the iteration is written in an abstract way, since we do not specify how to define the random $\RR^p$-valued variables $\widehat \varepsilon^k$.
But as we explain below, several known stochastic methods can be written under the form of \eqref{algorithm} when $\alpha_k \equiv 1$.

\begin{example}\label{Ex:stochastic methods choice epsilon}
If one takes $\widehat{\varepsilon}^k \equiv 0$, then \eqref{algorithm} becomes simply the proximal stochastic gradient method (Prox-SGD).
Variance-reduced methods, like the SAGA algorithm \citep{DefBacLac14}, or the Prox-SVRG algorithm (with option I) \citep{XiaZha14}, also fall  into this scheme. 
For these algorithms the idea is to take $\widehat{\varepsilon}^k$ as a  combination of previously computed estimates of the gradient, in order to reduce the variance of $\widehat d^{k}$.
For instance, SAGA corresponds to the choice:
	\begin{align*}
	&\widehat{\varepsilon}^k = \frac{1}{n} \sum\limits_{i=1}^n g_{k,i} - g_{k,i(k)}
	\end{align*}	
	\text{where the stored gradients are updated as} 
		\begin{align*}
	&g_{i,k} \eqdef \begin{cases}
	(\langle \widehat w^k, x_{i(k)} \rangle -y_{i(k)}) x_{i(k)} & \text{if } i = i(k) \\
	g_{k-1,i} & \text{else.}
	\end{cases}
	\end{align*}
\end{example}

We show in Theorem \ref{thm-ident-sgd} that for $n$ large enough and $\lambda_n$ appropiately chosen, we can identify after a finite number of iterations of \eqref{algorithm} an active stratum, which is again localized between two strata controlled by $w_0$ and $\eta_0$, respectively.
For this result to hold, we have to make some reasonable assumptions on algorithm \eqref{algorithm}.
We need first to make hypotheses on the parameters $\alpha_k$, $\gamma_k$, $\widehat{\varepsilon}^k$, to ensure that the iterates of \eqref{algorithm} converge to a solution of \eqref{eq-empirical-risk-bis}. Such hypotheses have been investigated in \citep{CombettesStochastic16,RosVilVu16,AtcForMou17} to establish useful convergence results. Beyond convergence, we study 
\textit{structure identification} of these algorithms. It is known that convergence is not enough for model consistency of iterates.
For instance, the classical proximal stochastic gradient method (corresponding to the case $\widehat{\epsilon}^k \equiv 0$) is known to fail at generating sparse iterates for the case $R = \Vert \cdot \Vert_1$; see below Example \ref{Ex:algos} for discussions and references.
To ensure the identification of low-dimensional strata, we  require some control on the variance of the descent direction, by acting either on the parameters $\gamma_k$ and $\alpha_k$, or by wisely controlling  $\widehat{\epsilon}^k$.
Before stating formally this set of hypotheses, we introduce $L_n\eqdef (1/n)\Vert \sum_{i=1}^n  x_i x_i^* \Vert$ and 
the $\sigma$-algebra $\Ff_k \eqdef \sigma(\widehat{w}^1,\dots , \widehat{w}^k)$ generated by the first $k$ iterates.
%equip $\RR^p$ with a probability space $(\Omega, \Ff, \PP)$, and we assume that $(\widehat{\epsilon}^k)_{k \in \NN}$ is a sequence of  $\RR^p$-random variables on $(\Omega, \Ff, \PP)$.
%In turn, $(\widehat{d}^k)_{k \in \NN}$ and $(\widehat{w}^k)_{k \in \NN}$ are also sequences of  $\RR^p$-random variables.
%Assuming that $(\widehat{w}^k)_{k \in \NN}$ is generated by the \eqref{algorithm} algorithm, we introduce for all $k \in \NN$ the $\sigma$-algebra generated by the first $k$ iterates,  $\Ff_k \eqdef \sigma(\widehat{w}^1,\dots , \widehat{w}^k)$.
%This sequence of $\sigma$-algebras is increasing, and we consider the corresponding filtration $\mathscr{F} \eqdef (\Ff_k)_{k\in \NN}$.
%Our assumptions on the algorithm are the then following:
\begin{equation}\label{H:hypothesis on algorithm}
\begin{cases}
\sigma_k \in [0,+\infty[, \ \alpha_k \in ]0,1], \ \gamma_k \in ]0,2/L_n[  \\[0.8ex]
\EE\left[\widehat{\varepsilon}^k | \mathcal{F}_k\right] = 0, \ \Var\left[ \widehat{d}^k | \Ff_k \right] \leq \sigma_k^2 \\
\ \widehat{d}^k - \EE\left[\widehat{d}^k | \mathcal{F}_k\right] \text{ converges a.s. to } 0 \\[0.2ex]
 \sum_{k=1}^{\infty} \alpha_k\gamma_k^2 \sigma_k^2 < + \infty \\[1ex]
 \norm{\widehat w^{k+1}- \widehat w^k} = o(\alpha_k \gamma_k) \ \text{a.s.}
\end{cases}
\tag{$\text{H}_\text{A}$}
\end{equation}

Let us briefly discuss these hypotheses.
The second line in~\eqref{H:hypothesis on algorithm} imposes some control on the variance of~$\widehat{d}^k$.
The fourth line asks for a fine balance between the parameters $\alpha_k$, $\gamma_k$ and $\sigma_k$.
For instance, one could take $\alpha_k$ and $\gamma_k$ to be constant, and work essentially on $\widehat \varepsilon^k$ to ensure that $\sigma_k \in \ell^2(\NN)$ and $\widehat{w}^k$ is a.s. asymptotically regular.
Instead, one could consider an algorithm where $\sigma_k$ does not vanish, but with appropriately decreasing step-sizes: $\gamma_k$ and $\alpha_k$ should be carefully chosen to guarantee that the fourth row of~\eqref{H:hypothesis on algorithm} holds.

%The same can be done by taking relaxation parameters $\alpha_k$ going to zero.

\begin{thm}\label{thm-ident-sgd}
Assume that \eqref{H:hypothesis on model} holds, and suppose that $\EE\left[\Vert \xp \Vert^4\right] <+\infty$ and $\EE\left[\vert \yp \vert^4\right] < + \infty$. Let $(\lambda_n)_{n\in\NN} \subset ]0,+\infty[$ be such that 
\[
\lambda_n \to 0 \quad\text{with $\lambda_n \sqrt{n/(\log \log n)} \to +\infty$}.
\]
Then, for $n$ large enough, if $(\widehat w^k)_{k\in\NN}$ is generated by~\eqref{algorithm} under assumption  \eqref{H:hypothesis on algorithm}, then for $k$ large enough: 
\vspace*{-1ex}
	\begin{equation*}
		M_{\weight_0} \leq M_{\widehat z^k} \leq \Jj_{R^*}(M_{\eta_0}^*) \quad \text{ holds almost surely}.	
	\end{equation*}
	\vspace*{-1.5ex}
\end{thm}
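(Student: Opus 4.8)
The plan is to prove this as the algorithmic counterpart of Theorem~\ref{T:identification problem proba}: I would isolate a purely \emph{deterministic} enlarged-identification principle valid for any mirror-stratifiable $R$, then verify its hypotheses for the primal-dual pair generated by \eqref{algorithm}, and finally transfer the resulting finite-sample sandwich to the population pair $(w_0,\eta_0)$ via Theorem~\ref{T:identification problem proba}. Concretely, for a fixed realization I would track the prox-outputs $\widehat z^k$ together with the subgradients they carry, and show both converge, so that the stratum of $\widehat z^k$ stabilizes inside the desired interval.

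The deterministic core I would establish first is the following. If $z^k \to z^\star$ and $\eta^k \to \eta^\star$ with $\eta^k \in \partial R(z^k)$ and $\eta^\star \in \partial R(z^\star)$, then for $k$ large $M_{z^\star} \leq M_{z^k} \leq \Jj_{R^*}(M^*_{\eta^\star})$. The lower bound follows from finiteness of $\Strat$ together with closedness of the stratification: for $z^k$ close to $z^\star$, only strata $M$ with $z^\star \in \cl(M)$, i.e.\ $M_{z^\star} \leq M$, can occur. The upper bound rests on the pointwise relation $M_z \leq \Jj_{R^*}(M^*_\eta)$ valid for \emph{any} $\eta \in \partial R(z)$, which is a direct consequence of Definition~\ref{D:mirror-strat} (one has $M^*_\eta \leq \Jj_R(M_z)$, and applying the decreasing inverse $\Jj_{R^*}$ with $\Jj_{R^*}\circ\Jj_R = \mathrm{id}$ yields the claim); combining it with dual lower semicontinuity $M^*_{\eta^\star} \leq M^*_{\eta^k}$ and the decreasingness of $\Jj_{R^*}$ gives $M_{z^k} \leq \Jj_{R^*}(M^*_{\eta^k}) \leq \Jj_{R^*}(M^*_{\eta^\star})$.

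Next I would verify these two convergences for \eqref{algorithm} under \eqref{H:hypothesis on algorithm}. Invoking the existing stochastic proximal-gradient convergence theory (\citep{CombettesStochastic16,RosVilVu16,AtcForMou17}), the first four lines of \eqref{H:hypothesis on algorithm} give $\widehat w^k \to \widehat w^\infty$ a.s.\ for some solution $\widehat w^\infty$ of \eqref{eq-empirical-risk-bis}. The relaxation step gives $\widehat z^k - \widehat w^k = \alpha_k^{-1}(\widehat w^{k+1}-\widehat w^k)$, which tends to $0$ by the asymptotic-regularity hypothesis $\norm{\widehat w^{k+1}-\widehat w^k}=o(\alpha_k\gamma_k)$, so $\widehat z^k \to \widehat w^\infty$. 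The prox optimality condition yields the natural subgradient $\widehat\eta^k \eqdef (\gamma_k\lambda_n)^{-1}(\widehat w^k - \gamma_k \widehat d^k - \widehat z^k) \in \partial R(\widehat z^k)$, which I would rewrite as $\lambda_n\widehat\eta^k = -(\alpha_k\gamma_k)^{-1}(\widehat w^{k+1}-\widehat w^k) - \widehat d^k$. The first term vanishes by asymptotic regularity, while $\EE[\widehat d^k\mid\Ff_k] = \nabla F_n(\widehat w^k) \to \nabla F_n(\widehat w^\infty)$ (with $F_n(w)\eqdef \tfrac{1}{2n}\sum_{i=1}^n(\dotp{x_i}{w}-y_i)^2$) since $\widehat\varepsilon^k$ is conditionally centered, and $\widehat d^k - \EE[\widehat d^k\mid\Ff_k]\to 0$ a.s.\ by hypothesis; hence $\widehat\eta^k \to -\lambda_n^{-1}\nabla F_n(\widehat w^\infty) \eqdef \widehat\eta^\infty \in \partial R(\widehat w^\infty)$, the finite-sample certificate. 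The deterministic principle then gives $M_{\widehat w^\infty} \leq M_{\widehat z^k} \leq \Jj_{R^*}(M^*_{\widehat\eta^\infty})$ a.s.\ for $k$ large.

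Finally, fixing $n$ large, I would transfer this to $(w_0,\eta_0)$: Theorem~\ref{T:identification problem proba} (together with the a.s.\ consistency of the finite-sample solutions, which applies to whichever solution $\widehat w^\infty$ is selected) gives $M_{w_0} \leq M_{\widehat w^\infty}$, and the certificate-convergence argument underlying that theorem gives $\widehat\eta^\infty \to \eta_0$, whence $M^*_{\eta_0} \leq M^*_{\widehat\eta^\infty}$ by dual lower semicontinuity and $\Jj_{R^*}(M^*_{\widehat\eta^\infty}) \leq \Jj_{R^*}(M^*_{\eta_0})$ by decreasingness of $\Jj_{R^*}$. Chaining all inequalities yields $M_{w_0}\leq M_{\widehat z^k}\leq \Jj_{R^*}(M^*_{\eta_0})$. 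The \textbf{main obstacle} is the a.s.\ convergence $\widehat d^k \to \nabla F_n(\widehat w^\infty)$, that is, showing the stochastic direction asymptotically loses its variance: this is exactly the mechanism that \emph{fails} for plain Prox-SGD and is secured only by variance reduction, and it is the delicate martingale/summability part resting on the second and third lines of \eqref{H:hypothesis on algorithm}. A secondary subtlety is ensuring the sandwich applies to the possibly non-unique solution $\widehat w^\infty$ actually reached, which I would dispatch via the consistency of all minimizers of \eqref{eq-empirical-risk-bis} toward $w_0$.
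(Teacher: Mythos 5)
Your proposal is correct, and it shares the paper's two-level skeleton: extract the dual iterate from the prox optimality condition and the relaxation step (your $\lambda_n\widehat\eta^k$ is exactly the paper's $\widehat v^k$ in \eqref{tis4}), show that $(\widehat z^k,\widehat \eta^k)$ converges a.s.\ to the empirical primal-dual pair (the content of Proposition~\ref{P:cv algo}, which the paper proves via a stochastic quasi-F\'ejer argument using cocoercivity of $\nabla h_n$ and \citep[Proposition~2.3]{CombettesStochastic15}, where you delegate to the stochastic proximal-gradient literature), and then transfer to $(w_0,\eta_0)$ using the consistency of Proposition~\ref{P:cv problem}. Where you genuinely diverge is the identification mechanism and the gluing. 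The paper proves a uniform $\delta$-ball principle centered at the \emph{fixed} pair $(w_0,\eta_0)$ (Proposition~\ref{T:mirror strat identification}, deduced from \citep[Theorem~1]{FadMalPey17} by contradiction) and composes the two convergences metrically: the ERM pair is within $\delta/2$ of $(w_0,\eta_0)$ for $n\geq N$, the algorithm pair within $\delta/2$ of the ERM pair for $k\geq K$, and a triangle inequality permits a single application of the principle. You instead re-prove, directly from Definition~\ref{D:mirror-strat}, a \emph{sequential} identification principle valid at an arbitrary limit pair (lower bound from finiteness of $\Strat$ and the frontier property of stratifications; upper bound from the pointwise relation $M_z\leq \Jj_{R^*}(M^*_\eta)$ for any $\eta\in\partial R(z)$, dual lower semicontinuity, and decreasingness of $\Jj_{R^*}$), apply it twice --- at the ERM pair for the iterates, at the population pair for the ERM quantities --- and chain the inequalities by transitivity of $\leq$. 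Your route is more self-contained and yields in passing the tighter intermediate sandwich $M_{\widehat w_{\lambda_n,n}}\leq M_{\widehat z^k}\leq \Jj_{R^*}(M^*_{\widehat\eta_{\lambda_n,n}})$; the paper's $\delta$-ball formulation buys a one-shot application and is the same tool that serves Theorem~\ref{T:identification problem proba}. Two points you should make explicit to close your argument: (i) your transfer step uses $\widehat\eta^\infty=\widehat\eta_{\lambda_n,n}$ regardless of which minimizer $\widehat w^\infty$ the algorithm selects; this is legitimate because the empirical dual problem is strongly convex on $\Im \widehat C_n$, so its solution is unique and equals $-\nabla h_n(w^\star)/\lambda_n$ for \emph{every} primal solution $w^\star$ (your $F_n$ is the paper's $h_n$; the paper sidesteps the same issue with its ``without loss of generality'' remark); (ii) you invoke decreasingness of $\Jj_{R^*}$ and not only of $\Jj_R$ --- this is indeed the intended reading of Definition~\ref{D:mirror-strat} (the bijection is order-reversing in both directions, as Example~\ref{Ex:mirror} illustrates), but it deserves a sentence since your chain applies $\Jj_{R^*}$ to dual inequalities on both levels.
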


\begin{example}\label{Ex:algos}
Let us look at two instances of \eqref{algorithm}.
\begin{itemize}
	\item The SAGA (resp. Prox-SVRG) algorithm is shown to verify \eqref{H:hypothesis on algorithm} in \citep{PooLiaSch18}, provided that $\alpha_k \equiv 1$ and $\gamma_k \equiv \gamma = 1/(3L_n)$ (resp. $\gamma_k \equiv \gamma$ taken small enough).
	\item The proximal stochastic gradient method (Prox-SGD) is a specialization of \eqref{algorithm} with $\alpha_k \equiv 1$,  $(\gamma_k)_{k \in \NN} \in \ell^2(\NN) \setminus \ell^1(\NN)$ and $\widehat{\varepsilon}^k \equiv 0$.
	If the iterates are bounded, the second line of \eqref{H:hypothesis on algorithm} automatically holds by the (strong) law of large numbers.
	Nevertheless, this algorithm does not satisfy the conclusions of Theorem \ref{thm-ident-sgd}: this was observed in \citep{Xia10,LeeWri12,PooLiaSch18}, and is illustrated in Section \ref{sec-numerics}. 
	A simple explanation is that for this algorithm, $\sigma_k$ does not converge to 0, which is why we need to impose that the stepsize $\gamma_k$ tends to zero.
	Even if it can be shown that $\norm{\widehat w^{k+1}- \widehat w^k} / \gamma_k$ is bounded, it cannot be ensured that it is $o(1)$, which breaks the last hypothesis in \eqref{H:hypothesis on algorithm}. Thus Theorem~\ref{thm-ident-sgd} does not apply in agreement with the observed behaviour of the SGD algorithm.
\end{itemize}
\end{example}

%%%
%\paragraph{Relation to previous works.}
\subsection{Relation to previous results.}\label{sec-relation}
Model consistency of the regularized ERM has already been investigated for special cases ($\ell_1$~\citep{zhao2006model}, $\ell_1$-$\ell_2$~\citep{bach2008group}, or nuclear norm~\citep{bach2008trace}) and for 
the class of partly-smooth functions \citep{2014-vaiter-ps-consistency}.
The existing results hold asymptotically in probability, e.g.\;of the form
\begin{equation}\label{Eq:asymptotic consistency}
\lim\limits_{n \to +\infty} \PP \left( M_{\weight_0}=M_{\widehat \weight_{\lambda_n,n}} \right) = 1,
\end{equation} 
while we show that the consistency \eqref{Eq:sandwich strata} holds almost surely, as soon as enough data is sampled.
Nevertheless, our result lacks a quantitative estimation of how large $n$ should be for the identification to hold.
As a comparison, \citep{2014-vaiter-ps-consistency} shows that the probability in \eqref{Eq:asymptotic consistency} converges as $1-n^{-1/2}$,  but the result heavily relies on the assumption that \eqref{D:irrepresentable condition} holds (which prevents the solution from ``jumping'' between the strata $M_{w_0}$ and $\Jj_{R^*}(M^*_{\eta_0})$). Such qualitative estimates cannot be derived in our more general results without stronger assumptions and/or structure, which we want to avoid.
% this is the price to pay for generality.

Compared to previous works, a chief advantage of our model consistency results is thus to avoid making an assumption which often fails to hold in high dimension. Indeed, as explained in Remark \ref{R:compressed sensing} and Section~\ref{sec-numerics}, the above-mentioned existing results hold under the irrepresentable condition \eqref{D:irrepresentable condition}; and many of these also assume that the expected covariance matrix $C = \EE_\rho\!\left[\xp \xp^\top\right]$ is invertible. The first work to deal with model consistency for a large class of functions without the irrepresentable condition assumption is \citep{FadMalPey17}, which introduces of the notion of mirror-stratifiable functions, 
from which the authors derive identification properties of a \textit{deterministic} penalized problem.
Our Theorem \ref{T:identification problem proba} comes with a similar flavor, but extended to a supervised learning scenario and random sampling, which brought technical challenges as detailed in Section\;\ref{S:sketch proof}.
% as well as the finite identification property for the deterministic Forward-Backward algorithm.

%This paper comes with the same flavor, but in the stochastic setting.

%They also derive the finite identification property for the deterministic Forward-Backward algorithm, under the same assumptions.

Finite activity identification for  {stochastic} algorithms has been a topic of interest in the past years. \citep{Xia10} made the observation that Prox-SGD has not the identification property for the $\ell_1$ case.
Instead, finite activity identification was proved by \cite{LeeWri12}, for the regularized dual averaging (RDA) method, and by \cite{PooLiaSch18}, for the SAGA and Prox-SVRG algorithms.
For these two papers, the regularizer $R$ is assumed to be partly-smooth, and a non-degeneracy assumption is made.
Again, Theorem \ref{thm-ident-sgd} does not need such an assumption. We also propose a general set of hypotheses \eqref{H:hypothesis on algorithm} encompassing all these algorithms and beyond: this allows an explanation for why Prox-SGD fails (see Example \ref{Ex:algos}), and could be used to analyze other algorithms than SAGA or Prox-SVRG.

\section{Sketch of  proofs}\label{S:sketch proof}

Our model consistency results follow from a sequence of results controlling the behaviour of optimal solutions and of iterates of algorithms. In this section, we sketch the rationale and the milestones of the proof; the proof of the two intermediate technical results are given in the supplementary material.

The core of the proofs rely on~\citep[Theorem~1]{FadMalPey17} about sensivity analysis of mirror-stratifiable functions. 
We state this result here in a modified form that is adapted to our analysis.
\begin{prop}\label{T:mirror strat identification}
Let $R$ be mirror-stratifiable.
Then, there exists $ \delta > 0$ such that for any pair $\eta\in\partial R(w)$,
\[
\max\{ \norm{ w - w_0} , \norm{\eta - \eta_0} \} \leq \delta \Rightarrow M_{w_0} \leq M_{ w} \leq \Jj_{R^*}(M_{\eta_0}^*).
\]
\end{prop}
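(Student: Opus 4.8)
The plan is to establish the two inequalities in the conclusion separately: the lower bound $M_{w_0} \leq M_w$ is a purely primal statement, whereas the upper bound $M_w \leq \Jj_{R^*}(M_{\eta_0}^*)$ is where the dual stratification and the mirror bijection enter. Throughout I would use only two structural facts: that the stratification is \emph{finite}, and that it satisfies the frontier condition $\Man \cap \cl(\Man') \neq \emptyset \Rightarrow \Man \subset \cl(\Man')$ recorded in \eqref{eq-order-strata}. Note also that since $\eta \in \partial R(w)$, Fenchel duality gives $w \in \partial R^*(\eta)$, so $w \in \dom(\partial R)$ and $\eta \in \dom(\partial R^*)$ hold automatically and the strata $M_w$, $M_\eta^*$ are well defined.

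First I would prove a local lower-semicontinuity property of the stratum map: there exists $\delta_1 > 0$ such that $\norm{w - w_0} \leq \delta_1$ (with $w \in \dom(\partial R)$) forces $M_{w_0} \leq M_w$. This is argued by contradiction. If it failed, there would be a sequence $w_j \to w_0$ with $M_{w_0} \not\leq M_{w_j}$; by finiteness of $\Strat$ one may pass to a subsequence lying in a single stratum $\Man$, and then $w_0 \in \cl(\Man)$, so $M_{w_0} \cap \cl(\Man) \neq \emptyset$, whence $M_{w_0} \leq \Man$ by the frontier condition, a contradiction. The identical argument applied to $R^*$ and the dual stratification $\Strat^*$ yields $\delta_2 > 0$ such that $\norm{\eta - \eta_0} \leq \delta_2$ implies $M_{\eta_0}^* \leq M_\eta^*$. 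Taking $\delta = \min(\delta_1,\delta_2)$ already delivers the lower bound, together with the dual inequality $M_{\eta_0}^* \leq M_\eta^*$.

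It remains to convert this dual inequality into the primal upper bound. The key link is the relation $M_w \leq \Jj_{R^*}(M_\eta^*)$, valid for \emph{any} $\eta \in \partial R(w)$. To see it, observe that $\ri(\partial R(w)) \subseteq \Jj_R(M_w)$ by definition of $\Jj_R$, and $\Jj_R(M_w)$ is a single dual stratum $M_w^*$ by mirror-stratifiability. Since $\partial R(w)$ is a nonempty closed convex set, $\partial R(w) = \cl(\ri(\partial R(w))) \subseteq \cl(M_w^*)$, so $\eta \in \cl(M_w^*)$; as $\eta \in M_\eta^*$, the frontier condition gives $M_\eta^* \leq M_w^* = \Jj_R(M_w)$. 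Applying the order-reversing bijection $\Jj_{R^*}$ from Definition \ref{D:mirror-strat} yields precisely $M_w \leq \Jj_{R^*}(M_\eta^*)$. Chaining this with $M_{\eta_0}^* \leq M_\eta^*$ from the previous step, and again using that $\Jj_{R^*}$ is decreasing, gives $M_w \leq \Jj_{R^*}(M_\eta^*) \leq \Jj_{R^*}(M_{\eta_0}^*)$, which is the desired upper bound.

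The main obstacle is the local lower-semicontinuity step, which is the only place where an actual $\delta$ is produced; it relies crucially on the finiteness of the stratification and on the frontier condition, and one must take care that all comparisons stay within $\dom(\partial R)$ (resp.\ $\dom(\partial R^*)$). The primal-dual link is then essentially a bookkeeping argument combining the definition of $\Jj_R$, the closedness of the subdifferential, and the order-reversing bijection, so the substance of the proof is concentrated in the fact that strata can only grow under small perturbations of a feasible primal-dual pair.
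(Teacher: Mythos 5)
Your proof is correct, but it takes a genuinely different route from the paper's. The paper proves Proposition~\ref{T:mirror strat identification} by contradiction: it extracts a sequence $(w^k,\eta^k)\in\Gr(\partial R)$ converging to $(w_0,\eta_0)$ while violating the sandwich, and then invokes \citep[Theorem~1]{FadMalPey17} as a black box to conclude that the sandwich must hold for $k$ large, which is the contradiction. All the structural content is thus delegated to the cited theorem. You instead reprove that content from first principles: (a) a local lower-semicontinuity of the stratum map (your $\delta_1$, $\delta_2$ step), which uses exactly finiteness of the stratification and the frontier condition \eqref{eq-order-strata}, and yields $M_{w_0}\leq M_w$ and $M_{\eta_0}^*\leq M_\eta^*$; (b) the primal-dual link $M_w \leq \Jj_{R^*}(M_\eta^*)$ for any subgradient pair, via $\partial R(w)=\cl(\ri(\partial R(w)))\subseteq\cl(\Jj_R(M_w))$; (c) chaining through the order-reversing bijection. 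What the paper's proof buys is brevity; what yours buys is a self-contained argument that exposes exactly which ingredients matter (finiteness, frontier condition, closed convexity of subdifferentials, the mirror bijection) --- in effect you have re-derived the relevant part of \citep[Theorem~1]{FadMalPey17}.

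One caveat deserves attention: in steps (b)--(c) you use that $\Jj_{R^*}$, and not just $\Jj_R$, is decreasing. Definition~\ref{D:mirror-strat} as written only asserts that $\Jj_R$ is decreasing, and for a bijection between general posets the one-directional property $\Man\leq\Man' \Rightarrow \Jj_R(\Man)\geq\Jj_R(\Man')$ does not imply that the inverse is order-reversing. This is harmless here because the intended notion (cf.\ \citealp{FadMalPey17}, and the paper's own Example~\ref{Ex:mirror}, which states the equivalence $M_I\leq M_J \Leftrightarrow M_I^*\geq M_J^*$) takes decreasingness as a biconditional, which makes $\Jj_{R^*}$ decreasing automatically. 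Note also that step (b) can be freed of this point by running your closure argument on $R^*$ instead: from $w\in\partial R^*(\eta)$, $\ri(\partial R^*(\eta))\subseteq\Jj_{R^*}(M_\eta^*)$ and $\partial R^*(\eta)=\cl(\ri(\partial R^*(\eta)))$ one gets $w\in\cl(\Jj_{R^*}(M_\eta^*))$ directly; but the chaining step (c) genuinely needs $\Jj_{R^*}$ to reverse $M_{\eta_0}^*\leq M_\eta^*$, so you should state (or cite) the biconditional form of the definition explicitly.
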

\begin{proof}
 Suppose for contradiction that no such $\delta$ exists. Let $(\delta_k)_{k \in \NN} \subset ]0,+\infty[$ and $(w^k,\eta^k)_{k \in \NN} \subset \Gr(\partial R)$ be such that $\delta_k \downarrow 0$, $\max(\norm{w^k-w_0},\norm{\eta^k-\eta_0}) \leq \delta_k$, but where $M_{w^k}$ does not satisfy the claimed inequalities. Then $(w^k,\eta^k) \to (w_0,\eta_0)$ as $k \to \infty$, and $(w_0,\eta_0) \in \Gr(\partial R)$ by definition in \eqref{E:dual expected pb}.
Upon applying \citep[Theorem~1]{FadMalPey17}, we have that $M_{w_0} \leq M_{w^k} \leq \Jj_{R^*}(M_{\eta_0}^*)$ for $k$ sufficiently large. This is a contradiction with the choice of $w^k$.
\end{proof}

Concerning Theorem \ref{T:identification problem proba}, we  introduce the notations
\begin{align*}
\widehat C_n &\eqdef \frac{1}{n}\sum_i x_ix_i^\top \in \RR^{p \times p},\\
\widehat u_n &\eqdef \frac{1}{n}\sum_i y_i x_i \in \RR^p, \quad 
u \eqdef \EE_\rho\left[\yp \xp\right],
\end{align*} 
which allows us to rewrite problems \eqref{Eq:primal expected problem} and
\eqref{eq-empirical-risk-bis} in a compact form: 
\begin{align*}
\ens{w_0} &= \uArgmin{w \in \RR^p,Cw=u}R(w),\\
\widehat w_{\lambda,n} &\in \uArgmin{w \in \RR^p} \lambda R(w) + \frac{1}{2}\langle \widehat C_n w,w \rangle - \langle  \widehat u_n,w \rangle.  
\end{align*}
The optimality conditions for \eqref{eq-empirical-risk-bis} allow to derive:
\begin{equation}\label{ident1}
\widehat \eta_{\lambda_n,n}  \in \partial R(\widehat w_{\lambda_n,n}), \quad \widehat \eta_{\lambda_n,n} \eqdef \frac{\widehat u_n - \widehat C_n \widehat w_{\lambda_n,n}}{\lambda_n}.
\end{equation}

In view of Proposition~\ref{T:mirror strat identification}, establishing Theorem\;\ref{T:identification problem proba} essentially boils down to showing the following proposition. The proof of this proposition requires technical lemmas to control the interlaced effects of convergence and sampling; see the supplementary material for details.

\begin{prop}\label{P:cv problem}
Under the assumptions of Theorem\;\ref{T:identification problem proba}, $(\widehat w_{\lambda_n,n},\widehat \eta_{\lambda_n,n}) {\underset{n\to +\infty}{\longrightarrow}}$  $(w_0,\eta_0)$ almost surely.
\end{prop}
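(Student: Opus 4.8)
The plan is to disentangle the two effects hidden in the pair $(\widehat w_{\lambda_n,n},\widehat \eta_{\lambda_n,n})$: the randomness of the design (to be tamed by a strong law) and the vanishing of the regularization (to be tamed by a sensitivity/variational analysis); the hypothesis coupling $\lambda_n$ and $n$ is exactly what renders the sampling error negligible at the relevant scale. \emph{Law of the iterated logarithm.} Since $\EE_\rho[\norm{\xp}^4]<+\infty$ and $\EE_\rho[|\yp|^4]<+\infty$, the coordinates of $\xp\xp^\top$ and of $\yp\xp$ have finite variance, so the Hartman--Wintner LIL applied coordinatewise gives, almost surely,
\[
\max\{\norm{\widehat C_n - C},\norm{\widehat u_n - u}\} = O\!\left(\sqrt{\tfrac{\log\log n}{n}}\right).
\]
In particular $\widehat C_n\to C$ and $\widehat u_n\to u$ a.s., and the assumption $\lambda_n\sqrt{n/(\log\log n)}\to+\infty$ rewrites as $\sqrt{\log\log n/n}=o(\lambda_n)$. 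I work from now on on the full-probability event where this holds.

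\emph{Primal convergence.} To obtain $\widehat w_{\lambda_n,n}\to w_0$ I would run a Tikhonov selection argument on $F_n(w)\eqdef\lambda_n R(w)+\tfrac12\dotp{\widehat C_n w}{w}-\dotp{\widehat u_n}{w}$. Comparing $F_n(\widehat w_{\lambda_n,n})\le F_n(w_0)$ and using that $R$ is bounded below first yields a.s. boundedness of $(\widehat w_{\lambda_n,n})_n$; any cluster point $\bar w$ then minimizes the limit quadratic $Q(w)\eqdef\tfrac12\dotp{Cw}{w}-\dotp{u}{w}$, that is $C\bar w=u$, while dividing the same inequality by $\lambda_n$ and letting $n\to\infty$ forces $R(\bar w)\le R(w_0)$. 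Hence $\bar w$ solves $\min\{R(w):Cw=u\}$, so $\bar w=w_0$ by the uniqueness in \eqref{H:hypothesis on model}; as every cluster point is $w_0$, the whole sequence converges.

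\emph{Dual convergence.} Inserting $u=Cw_0$ into \eqref{ident1} produces the decomposition
\[
\widehat\eta_{\lambda_n,n} = \underbrace{\frac{(\widehat u_n-u)-(\widehat C_n-C)\widehat w_{\lambda_n,n}}{\lambda_n}}_{A_n}\;-\;\frac{C(\widehat w_{\lambda_n,n}-w_0)}{\lambda_n}.
\]
By the two previous steps the numerator of $A_n$ is $O(\sqrt{\log\log n/n})$, hence $A_n\to0$ a.s., and the claim reduces to $-C(\widehat w_{\lambda_n,n}-w_0)/\lambda_n\to\eta_0$. Granting a.s. boundedness of $\widehat\eta_{\lambda_n,n}$, any cluster point $\bar\eta$ lies in $\partial R(w_0)$ (closedness of $\Gr(\partial R)$ together with $\widehat w_{\lambda_n,n}\to w_0$) and in $\Im C$ (because $\widehat\eta_{\lambda_n,n}-A_n\in\Im C$, which is closed). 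To force $\bar\eta=\eta_0$ I would invoke monotonicity of $\partial R$ at $(\widehat w_{\lambda_n,n},\widehat\eta_{\lambda_n,n})$ against $(w_0,\eta)$ for arbitrary $\eta\in\partial R(w_0)\cap\Im C$; substituting the optimality relation $C(\widehat w_{\lambda_n,n}-w_0)=\lambda_n(A_n-\widehat\eta_{\lambda_n,n})$, dividing by $\lambda_n$, and setting $\zeta_n\eqdef\widehat\eta_{\lambda_n,n}-A_n$ gives
\[
\dotp{C^\dagger\zeta_n}{\zeta_n}-\dotp{C^\dagger\eta}{\zeta_n}\;\le\;\dotp{A_n}{(\widehat w_{\lambda_n,n}-w_0)/\lambda_n}.
\]
Passing to the limit (the right-hand side vanishing, see below) and applying Cauchy--Schwarz to the positive semidefinite form $\dotp{C^\dagger\cdot}{\cdot}$ on $\Im C$ yields $\dotp{C^\dagger\bar\eta}{\bar\eta}\le\dotp{C^\dagger\eta}{\eta}$ for every feasible $\eta$; since $\bar\eta$ is itself feasible and this form is strictly convex on $\Im C$, its unique minimizer over $\partial R(w_0)\cap\Im C$ is $\eta_0$, so $\bar\eta=\eta_0$ and $\widehat\eta_{\lambda_n,n}\to\eta_0$.

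\emph{Main obstacle.} The crux, and the reason for the auxiliary lemmas in the supplement, is to legitimize the two granted facts: the a.s. boundedness of $\widehat\eta_{\lambda_n,n}$ and the vanishing of the cross term $\dotp{A_n}{(\widehat w_{\lambda_n,n}-w_0)/\lambda_n}$. The latter does not follow from $A_n\to0$ alone, since $(\widehat w_{\lambda_n,n}-w_0)/\lambda_n$ need not be bounded; it requires a companion linear rate $\norm{\widehat w_{\lambda_n,n}-w_0}=O(\lambda_n)$ a.s., extracted from a sharpened energy estimate on $F_n$ controlling $\dotp{\widehat C_n(\widehat w_{\lambda_n,n}-w_0)}{\widehat w_{\lambda_n,n}-w_0}$ against $\lambda_n$ plus the sampling error. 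It is precisely here that the vanishing of $\lambda_n$ and the random perturbation of $(C,u)$ become interlaced, and keeping both under control simultaneously is the technical heart of the proof.
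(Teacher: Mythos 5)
Your primal step is essentially the paper's argument (energy comparison at $w_0$, extraction of cluster points, lower semicontinuity of $R$, uniqueness of $w_0$), but the boundedness claim inside it is already a gap: since $\widehat C_n$ is only positive semi-definite and $\lambda_n\to 0$, the comparison $F_n(\widehat w_{\lambda_n,n})\le F_n(w_0)$ together with ``$R$ bounded below'' only yields $R(\widehat w_{\lambda_n,n})=O(1/\lambda_n)$, which is perfectly compatible with $\norm{\widehat w_{\lambda_n,n}}\to\infty$ along directions of $\Ker \widehat C_n$. This is why the paper proves boundedness in a separate Lemma~\ref{L:boundedness primal pb}, via recession functions: uniqueness of $w_0$ in \eqref{Eq:primal expected problem} forces $\Ker R^\infty\cap\Ker C=\{0\}$, and combined with $\Im\widehat C_n=\Im C$ for $n$ large (Lemma~\ref{L:properties of C_n in stochastic}) this gives level-boundedness of suitable functions, hence boundedness of both $(\widehat w_{\lambda_n,n})_n$ and $(w_{\lambda_n})_n$.

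The genuine gap is in your dual step, and your own ``main obstacle'' paragraph does not close it. Your argument hinges on (i) a.s.\ boundedness of $\widehat\eta_{\lambda_n,n}$ and (ii) vanishing of the cross term $\dotp{A_n}{(\widehat w_{\lambda_n,n}-w_0)/\lambda_n}$, and you propose to obtain (ii) from a linear rate $\norm{\widehat w_{\lambda_n,n}-w_0}=O(\lambda_n)$. No such rate is available under \eqref{H:hypothesis on model}: the standard source-condition computation controls only $\norm{C(\widehat w_{\lambda_n,n}-w_0)}$, i.e.\ the $\Im C$-component of the error, while the $\Ker C$-component of $\widehat w_{\lambda_n,n}-w_0$ admits no rate without extra structure (polyhedrality, the nondegeneracy condition \eqref{D:irrepresentable condition}, strong convexity), which is precisely what the paper refuses to assume; note also that obtaining (i) from your monotonicity inequality with $\eta=\eta_0$ reintroduces the same cross term, so the scheme is circular. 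The paper's proof avoids the quantity $(\widehat w_{\lambda_n,n}-w_0)/\lambda_n$ altogether by interposing the population regularized pair $(w_{\lambda_n},\eta_{\lambda_n})$ from \eqref{eq-limitting-risk-lagr} and \eqref{E:dual expected penalized pb}, and splitting $\norm{\widehat\eta_{\lambda_n,n}-\eta_0}\le\norm{\widehat\eta_{\lambda_n,n}-\eta_{\lambda_n}}+\norm{\eta_{\lambda_n}-\eta_0}$. The first term is a sampling perturbation at \emph{fixed} $\lambda_n$: the monotonicity trick you use, applied instead to the pair $(\widehat w_{\lambda_n,n},w_{\lambda_n})$, gives $\norm{C(\widehat w_{\lambda_n,n}-w_{\lambda_n})}=O(r_n)$ with $r_n\eqdef\max\{\norm{\widehat u_n-u},\norm{\widehat C_n-C}\}$ (Lemma~\ref{L:estimate primal pb}); no power of $\lambda_n$ is lost because both solutions share the same regularization parameter, whence $\norm{\widehat\eta_{\lambda_n,n}-\eta_{\lambda_n}}=O(r_n/\lambda_n)\to 0$. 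The second term is purely deterministic: $\eta_\lambda\to\eta_0$ as $\lambda\to 0$ follows from the optimality inequalities of the dual problems, which also give boundedness of $(\eta_\lambda)_{\lambda>0}$ via $\dotp{C^\dagger\eta_\lambda}{\eta_\lambda}\le\dotp{C^\dagger\eta_0}{\eta_0}$, together with the identification of $\partial R(w_0)\cap\Im C$ as the dual solution set (Lemma~\ref{L:limit solutions belong to partial R}). This decoupling of the sampling error (at fixed $\lambda$) from the deterministic $\lambda\to 0$ limit is the missing idea in your proposal.
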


From  Proposition \ref{P:cv problem}, we deduce that there exists $N \in \NN$ such that for all $n \geq N$:
\begin{equation}\label{tis1.5}
\max\{ \norm{\widehat w_{\lambda_n,n} - w_0} , \norm{\widehat \eta_{\lambda_n,n} - \eta_0} \} \leq \delta/2 \quad a.s.
\end{equation}
Using Proposition \ref{T:mirror strat identification}, we deduce that \eqref{Eq:sandwich strata} holds a.s. for all $n \geq N$.
To prove Theorem \ref{thm-ident-sgd}, we keep $n \geq N$  fixed, and consider $(\widehat w^k)_{k \in \NN}$ to be generated by the \eqref{algorithm} algorithm.
Using the definition of $\widehat w^{k+1}$, we can write
\begin{align}
\widehat z^{k} &= \widehat w^k + \frac{\widehat w^{k+1} - \widehat w^k}{\alpha_k}, \label{tis2.0} \\ 
\widehat w^k - \gamma_k\widehat{d}^k &\in \widehat z^k + \gamma_k \lambda_n \partial R(\widehat z^k).\label{tis2}
\end{align}
%Also, use the definition of $\Prox_{\lambda_n\gamma_k R}$ to write
%\begin{equation}\label{tis3}
%\widehat w^k - \gamma_k \widehat{d}^k \in \widehat z^k + \gamma_k \lambda_n \partial R(\widehat z^k).
%\end{equation}
Let us introduce 
\begin{align*}
h_n(w) &\eqdef (1/2n) \sum_{i=1}^{n} (\langle w,x_i \rangle - y_i)^2\\
\widehat \xi^k &\eqdef \widehat{\varepsilon}^k - \nabla h_n(\widehat{w}^k) + (\langle w,x_{i(k)} \rangle - y_{i(k)})x_{i(k)},
\end{align*}
so that 
\eqref{tis2.0} and \eqref{tis2}  can be rewritten as
\begin{equation}\label{tis4}
\widehat v^k \eqdef \frac{\widehat w^k - \widehat w^{k+1}}{\alpha_k \gamma_k} - \xi_k - \nabla {h_n}(\widehat w^k) \in \lambda_n \partial R(\widehat z^k).
\end{equation} 

The missing block to conclude the proof of Theorem \ref{thm-ident-sgd} is then the next proposition whose proof is in the supplementary material.
\begin{prop}\label{P:cv algo}
Let $n \in \NN$, $\lambda_n \in ]0,+\infty[$, and let $(\widehat w^k)_{k \in \NN}$ be generated by the \eqref{algorithm} algorithm under assumption \eqref{H:hypothesis on algorithm}. 
Then
$(\widehat z^k,\widehat v^k)$ converges almost surely to $(\widehat w_{\lambda_n,n},\widehat \eta_{\lambda_n,n})$, as $k \to +\infty$.
\end{prop}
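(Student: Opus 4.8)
The plan is to prove Proposition~\ref{P:cv algo} by establishing that the pair $(\widehat z^k, \widehat v^k)$ converges almost surely to the unique primal-dual solution $(\widehat w_{\lambda_n,n}, \widehat \eta_{\lambda_n,n})$ of the fixed (in $n$) problem \eqref{eq-empirical-risk-bis}. The key structural observation is that, by construction in \eqref{tis4}, we already have $\widehat v^k \in \lambda_n \partial R(\widehat z^k)$, so $(\widehat z^k, \widehat v^k)$ lies on the graph $\Gr(\partial R)$ (up to the scaling $\lambda_n$) for every $k$; since this graph is closed, it suffices to show that both components converge and that the primal limit is $\widehat w_{\lambda_n,n}$. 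Concretely, I would decompose $\widehat v^k$ into its three summands: the discrete-velocity term $(\widehat w^k - \widehat w^{k+1})/(\alpha_k \gamma_k)$, the noise term $\widehat \xi^k$, and the deterministic gradient $\nabla h_n(\widehat w^k)$.

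First I would invoke the convergence hypotheses bundled in \eqref{H:hypothesis on algorithm}, together with the cited convergence theory \citep{CombettesStochastic16,RosVilVu16,AtcForMou17}, to conclude that $\widehat w^k \to \widehat w_{\lambda_n,n}$ almost surely; this is the convergence-to-a-minimizer part, which these hypotheses were precisely designed to guarantee. From the relaxation update $\widehat w^{k+1} = (1-\alpha_k)\widehat w^k + \alpha_k \widehat z^k$ rewritten as \eqref{tis2.0}, and the last line of \eqref{H:hypothesis on algorithm} giving $\norm{\widehat w^{k+1} - \widehat w^k} = o(\alpha_k \gamma_k)$, I can control the velocity term: indeed $\widehat z^k - \widehat w^k = (\widehat w^{k+1} - \widehat w^k)/\alpha_k$, so if $\alpha_k$ stays bounded below the asymptotic regularity forces $\widehat z^k - \widehat w^k \to 0$ and hence $\widehat z^k \to \widehat w_{\lambda_n,n}$; more care is needed when $\alpha_k \to 0$, but the same $o(\alpha_k \gamma_k)$ bound makes the velocity term $(\widehat w^k - \widehat w^{k+1})/(\alpha_k \gamma_k)$ vanish almost surely, which is exactly what is needed for $\widehat v^k$.

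Next I would handle the noise term $\widehat \xi^k$. Its conditional mean vanishes by the zero-mean hypothesis $\EE[\widehat \varepsilon^k \mid \Ff_k]=0$ combined with the fact that $(\langle \widehat w^k, x_{i(k)}\rangle - y_{i(k)})x_{i(k)}$ has conditional expectation $\nabla h_n(\widehat w^k)$ under uniform sampling of $i(k)$; thus $\widehat \xi^k = \widehat d^k - \EE[\widehat d^k \mid \Ff_k]$, which converges almost surely to $0$ by the third line of \eqref{H:hypothesis on algorithm}. Meanwhile the continuous map $\nabla h_n$ (a fixed affine function here) sends $\widehat w^k \to \widehat w_{\lambda_n,n}$ to $\nabla h_n(\widehat w^k) \to \nabla h_n(\widehat w_{\lambda_n,n})$. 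Assembling the three pieces, $\widehat v^k \to -\nabla h_n(\widehat w_{\lambda_n,n})$, and comparing with the optimality condition \eqref{ident1} identifies this limit as $\lambda_n \widehat \eta_{\lambda_n,n}$; combined with the closedness of $\Gr(\partial R)$ and the already-established $\widehat z^k \to \widehat w_{\lambda_n,n}$, this pins down the dual limit as claimed.

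The main obstacle I anticipate is reconciling the two distinct parameter regimes permitted by \eqref{H:hypothesis on algorithm} (constant $\alpha_k,\gamma_k$ with vanishing variance $\sigma_k$, versus non-vanishing variance with decaying stepsizes) within a single clean argument for the velocity term $(\widehat w^k - \widehat w^{k+1})/(\alpha_k \gamma_k)$. The delicate point is that this term carries the $\gamma_k^{-1}$ factor, so merely knowing $\widehat w^k$ converges is insufficient; one must genuinely exploit the a.s. asymptotic regularity relative to the stepsize, i.e. the $o(\alpha_k\gamma_k)$ rate, which is why that hypothesis is stated in precisely this form. Establishing that rate a.s. (rather than just boundedness of the rescaled increments) is the crux, and is exactly the distinction that Example~\ref{Ex:algos} identifies as the reason Prox-SGD fails while SAGA and Prox-SVRG succeed.
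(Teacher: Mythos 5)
Your identification-of-the-limits argument --- decomposing $\widehat v^k$ into the velocity term $(\widehat w^k-\widehat w^{k+1})/(\alpha_k\gamma_k)$, the noise $\widehat\xi^k$, and the gradient $\nabla h_n(\widehat w^k)$; killing the first by the a.s. regularity $\norm{\widehat w^{k+1}-\widehat w^k}=o(\alpha_k\gamma_k)$ and the second by the third line of \eqref{H:hypothesis on algorithm}; and passing to the limit through the closed graph of $\lambda_n\partial R$ --- is exactly the paper's argument. The genuine gap is your first step: you treat the a.s. convergence of $\widehat w^k$ to a minimizer of \eqref{eq-empirical-risk-bis} as a black box imported from \citep{CombettesStochastic16,RosVilVu16,AtcForMou17}. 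Those works are cited in the paper only as motivation for \eqref{H:hypothesis on algorithm}; their hypotheses are not implied by \eqref{H:hypothesis on algorithm}. In particular, \eqref{H:hypothesis on algorithm} contains no condition of the type $\sum_k\alpha_k\gamma_k=+\infty$ (or any other ``infinite travel'' requirement) that such convergence theorems use to prevent the iterates from stalling before reaching the solution set; in this paper that role is played precisely by the $o(\alpha_k\gamma_k)$ asymptotic regularity, so convergence must be proved under \eqref{H:hypothesis on algorithm}, not cited.

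The paper does this work, and the logical order matters. It first shows, via the relaxation identity for $\norm{\widehat w^{k+1}-w}^2$, non-expansiveness of $\Prox_{\gamma_k\lambda_n R}$, $1/L$-cocoercivity of $\nabla h_n$, and the conditional variance bound together with $\sum_k\alpha_k\gamma_k^2\sigma_k^2<+\infty$, that $(\widehat w^k)_{k\in\NN}$ is a stochastic quasi-F\'ejer sequence relative to the solution set, so that \citep[Proposition~2.3]{CombettesStochastic15} yields a.s. boundedness. Then --- and this is where your decomposition actually enters the paper's proof --- every a.s. cluster point $\bar w$ is identified as a minimizer: along the subsequence, $\widehat v^k\to-\nabla h_n(\bar w)$ and $\widehat z^k\to\bar w$, and maximal monotonicity of $\lambda_n\partial R$ gives $0\in\nabla h_n(\bar w)+\lambda_n\partial R(\bar w)$. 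Only after this does \citep[Proposition~2.3(iv)]{CombettesStochastic15} upgrade the cluster-point property to a.s. convergence of the whole sequence. So the cluster-point identification is an \emph{input} to the convergence theorem, not a post-processing of it, and your proposed order (convergence first, identification after) cannot be fixed merely by finding a better citation. A smaller inaccuracy: \eqref{eq-empirical-risk-bis} need not have a unique primal solution ($\widehat C_n$ can be singular), so ``the unique primal-dual solution'' is an overstatement; only the dual solution is unique (by strong convexity of the dual), and the paper handles the primal ambiguity by identifying, without loss of generality, the a.s. limit with $\widehat w_{\lambda_n,n}$.
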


%\begin{proof}[Sketch of the proof]
%{\color{red}The proof consists in exploiting the nonexpansiveness of the proximal mapping, and the cocoercivity of $\nabla h$, to prove that $(\widehat w_k)_{k\in\NN}$ is a stochastic quasi-F\'ejer sequence.
%From this we deduce, thanks to hypothesis \eqref{H:hypothesis on algorithm} and \cite[Proposition~2.3]{CombettesStochastic15}, that $\widehat w_k$ converges a.s. to $\widehat w_{\lambda_n,n}$.
%The convergence of $(\widehat z^k,\widehat v^k)$  follows easily from \eqref{H:hypothesis on algorithm} and \eqref{tis4}.}
%\end{proof}

%Let $\widehat \eta_{\lambda_n,n} \eqdef -\nabla h(\widehat w_{\lambda_n,n})$.
We can now complete the proof of Theorem \ref{thm-ident-sgd} as follows.
In  light of Proposition \ref{P:cv algo}, we deduce that there exists $K \in \NN$ such that for all $k \geq K$,
\begin{equation*}
\max\{ \norm{\widehat w_{\lambda_n,n} - \widehat z^k} , \norm{\widehat \eta_{\lambda_n,n} - \widehat v^k} \} \leq \delta/2 \quad a.s.
\end{equation*}
Without loss of generality, we can assume that the limit of the algorithm  is the $\widehat w_{\lambda_n,n}$ appearing in \eqref{tis1.5}. 
The above inequality, combined with  \eqref{tis1.5}, allows us to use Proposition \ref{T:mirror strat identification}, 
and this proves Theorem \ref{thm-ident-sgd}.

% !TEX root = ../stratificationAISTATS.tex

\begin{figure*}[!htb]\centering
\begin{tabular}{@{}c@{\hspace{10mm}}c@{}}
\includegraphics[width=.43\linewidth]{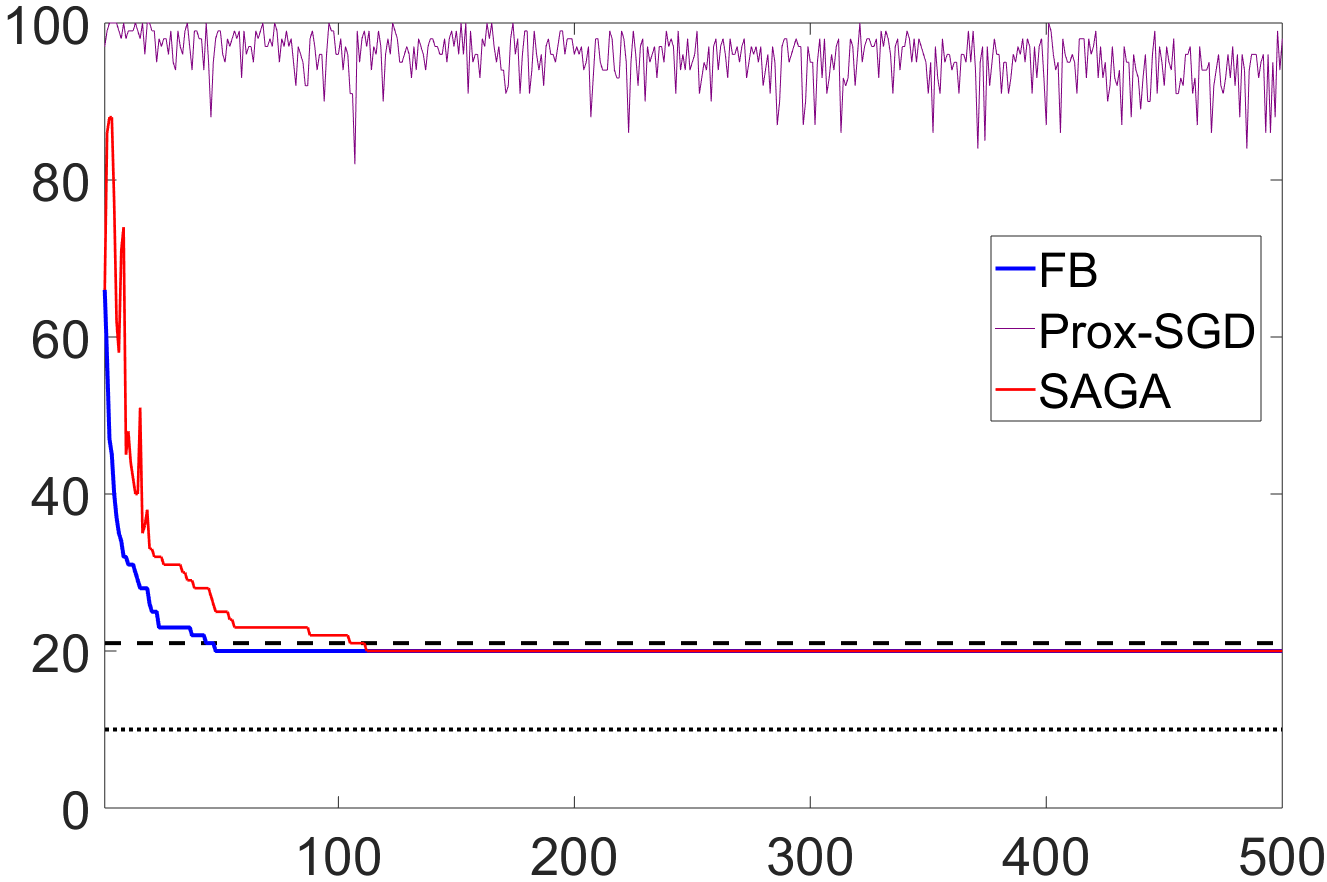} &
\includegraphics[width=.43\linewidth]{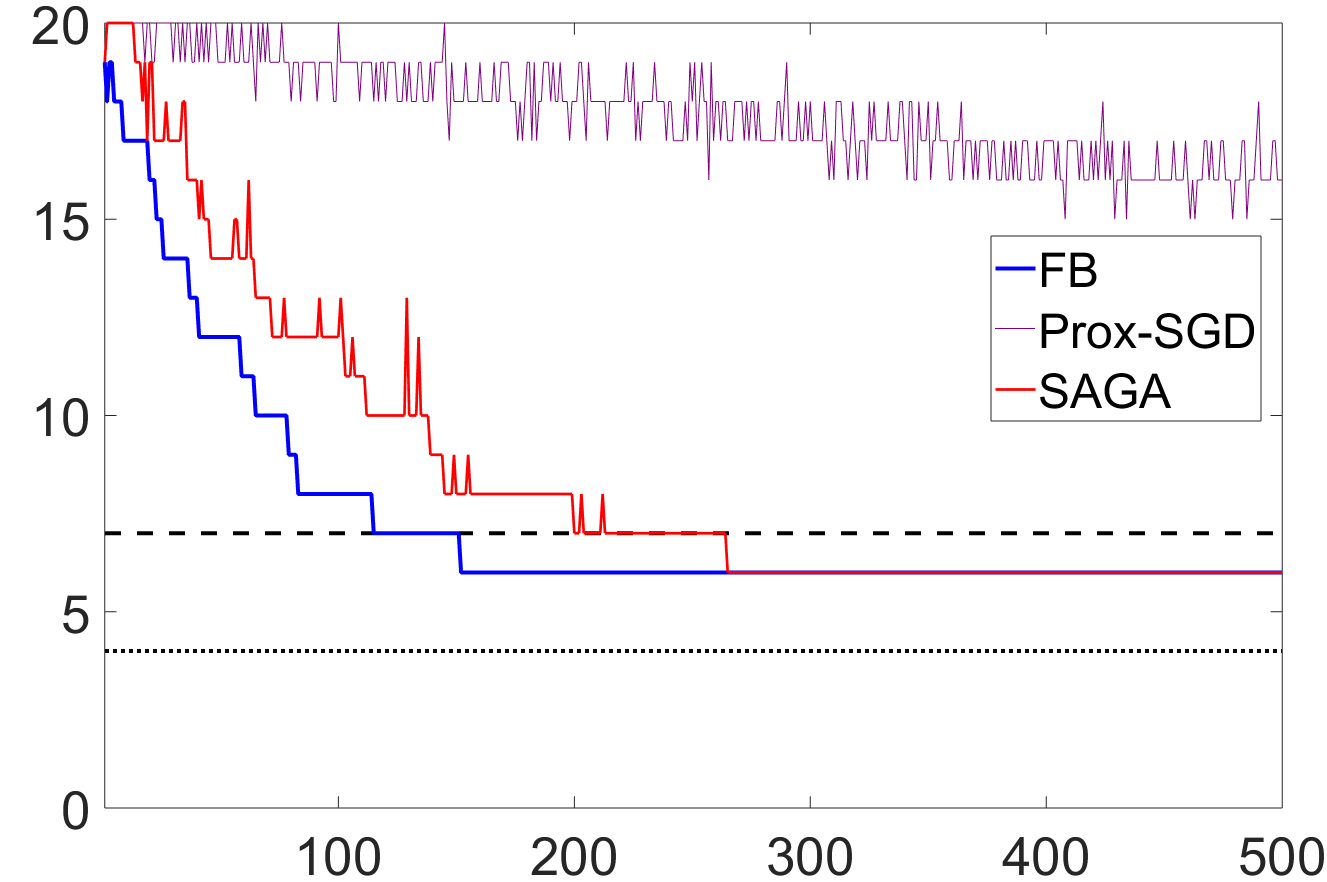}\\
$R=\norm{\cdot}_1$ & $R=\norm{\cdot}_*$
\end{tabular}
\caption{Evolution of $R_0( M_{\widehat{w}^k} )$ along the batches of the FB (blue), Prox-SGD (purple) and SAGA  (red) algorithms, applied to solve~\eqref{eq-empirical-risk-bis}. 
The black dotted line (resp. black dashed line) indicates the value $R_0( M_{w_0})$ (resp. the value of $R_0(\Jj_{R^*}(M^*_{\eta_0}))$ ); these are the dimensions of the two extreme strata.}
\label{F:comparison SGD SAGA FB}
\end{figure*}

\begin{figure*}[!htb]\centering
\begin{tabular}{@{}c@{\hspace{10mm}}c@{}}
\includegraphics[width=.43\linewidth]{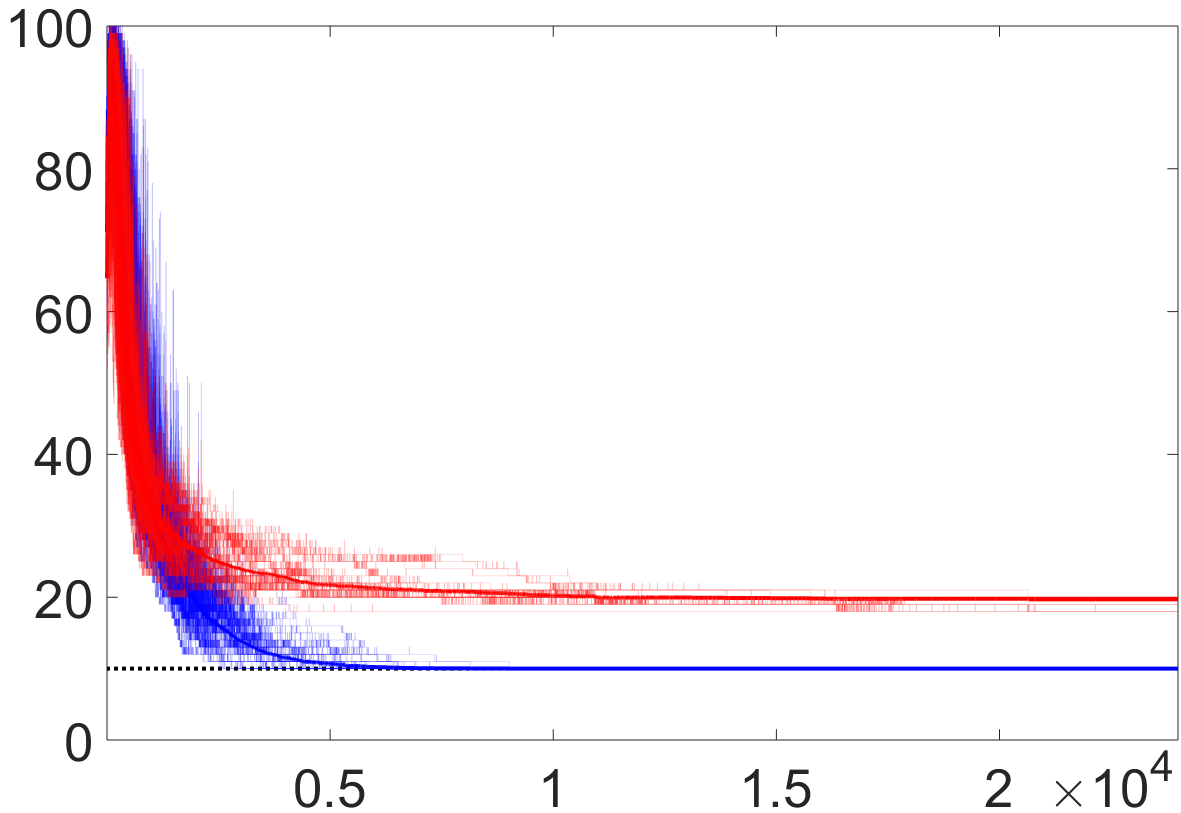} &
\includegraphics[width=.43\linewidth]{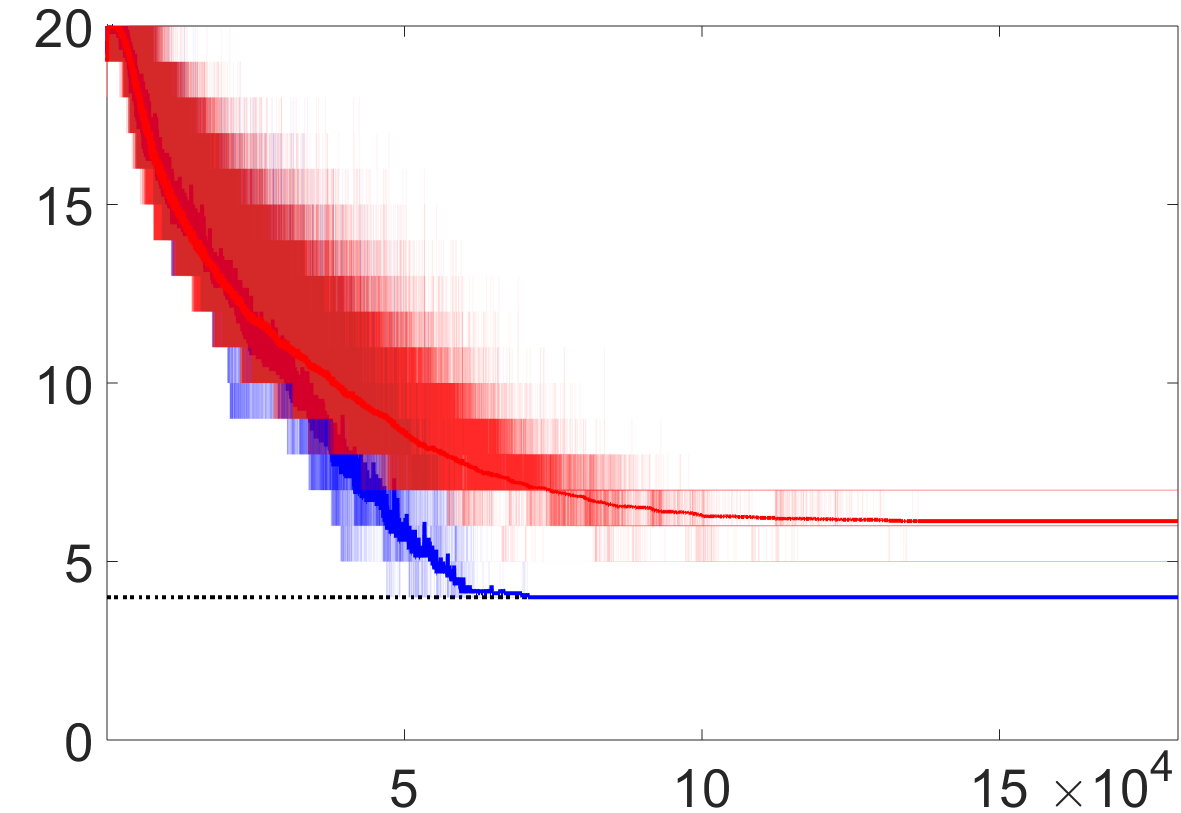}\\
$R=\norm{\cdot}_1$ & $R=\norm{\cdot}_*$
\end{tabular}
\caption{\label{F:various plots for SAGA}
 Evolution of $R_0(M_{\widehat{w}^k})$ along the iterations of SAGA, for problems where $ R_0(\Jj_{R^*}(M^*_{\eta_0})) =  R_0( M_{w_0}) + \delta$. 
Blue trajectories correspond to problems for which $\delta =0$, and for red trajectories  $\delta=10$ (left) or $3$ (right). The thick lines correspond to averaged trajectories.
 The black dotted line indicates the value $R_0( M_{w_0})$.
}
\end{figure*}

%%%%%%%%%%%%%%%%%%%%%%%%%%%%%%%%%%%
\section{Numerical illustrations for sparse/low-rank regularization}
\label{sec-numerics}

We give some numerical illustrations of our model consistency results for two popular regularizers: the $\ell_1$-norm and the nuclear norm. We generate random problem instances and control the low-complexity of the primal-dual pair of strata $(M_{\weight_0},\Jj_{R^*}(M^*_{\eta_0}))$. The low-complexity of a strata $M_{\weight}$ (i.e., the level of low-complexity of $\weight$) is measured by
\begin{align*}
R_0(M_\weight) &\eqdef \norm{\weight}_0 \quad \text{for $R=\norm{\cdot}_1$} ,\\
R_0(M_\weight) &\eqdef \rank(\weight) \quad \text{for $R=\norm{\cdot}_*$}.
\end{align*}
Observe that $R_0$ is well defined, since it does not depend of the choice of $w$ in the strata (see Example \ref{Ex:strata}).
%We also draw randomly a sparse vector $w_0 \in \RR^{p}$, with $R_0(w_0) = s$ (where $R_0$ is to be specified), 
\paragraph{Setup.}
The instances are randomly generated as follows.
For $R=\norm{\cdot}_1$, $\weight_0$ is drawn randomly among sparse vectors with sparsity level $R_0(M_{\weight_0}) = \norm{\weight_0}_0=s$, and we take $(p,n,s,\lambda)=(100,50,10,0.2)$. 
For $R=\norm{\cdot}_*$, $\weight_0$ is drawn randomly among low-rank matrices with rank $R_0(M_{\weight_0}) = \rank(\weight_0)=s$, and we take $(p,n,s,\lambda) = (20\times 20,300,4,0.03)$. 
The features $(x_i)_{i=1}^{n}$ are drawn at random in $\RR^{p}$ with i.i.d.\;entries from a zero-mean standard Gaussian distribution.
We take $y_i$ as $\langle w_0,x_i \rangle$, to which we add a zero-mean white Gaussian noise with standard deviation $10^{-2}$. 
We compute $\eta_0$ with an interior point solver, from which we deduce the upper-bound $R_0(\Jj_{R^*}(M^*_{\eta_0}))$.

%\begin{figure}[h]\centering
%\begin{tabular}{@{}c@{\hspace{1mm}}c@{}}
%\includegraphics[width=.49\linewidth]{img/l1/histo.png} &
%\includegraphics[width=.49\linewidth]{img/nuclear/histo.png}\\
%$R=\norm{\cdot}_1$ & $R=\norm{\cdot}_*$
%\end{tabular}
%\caption{\label{F:histograms gap penalized problems}
%Histogram of the gap $R_0(\widehat{w}_{\lambda,n}) - R_0(w_0)$: vertical axis is the ratio among the 1000 realizations achieving a certain gap. \todo{Jalal: This experiment is the same as SIOPT paper. Should be removed (see also remark by Gabriel).}
%}
%
%\end{figure}

%\vspace*{-1.5cm}

\paragraph{FB vs.\;Prox-SGD vs.\;SAGA.}
First, we compare the deterministic Forward-Backward (FB) algorithm, the Prox-SGD method and SAGA on a simple instance of~\eqref{eq-empirical-risk-bis}. All algorithms are run with $\alpha_k \equiv 1$, and we take $\gamma_k \equiv 1.8/ L_n$ for the FB algorithm, $\gamma_k = 10/(k + 3\times 10^4)$, and $\gamma_k \equiv 1/(3L_n')$ for SAGA, where $L_n$ is defined in \eqref{H:hypothesis on algorithm} and $L_n' \eqdef \max_{i} \norm{x_i}^2$. 
Figure~\ref{F:comparison SGD SAGA FB} depicts the evolution of $R_0(M_{ \widehat w^k })$ while running these three algorithms on \eqref{eq-empirical-risk-bis}.
At each iteration, FB visits all the data at once, while Prox-SGD and SAGA need only one data. To fairly compare  these three algorithms, we plot only the iterates at every batch (i.e. all iterates for FB, and one every $n$ iterates for the stochastic algorithms).

As expected, the two stochastic algorithms exhibit an oscillating behaviour.
But for SAGA, these oscillations are damped quickly, and the support of $\widehat{w}^k$ stabilizes after a finite number of iterations.
On the contrary, Prox-SGD suffers from constant variations of the support, and is unable to generate iterates with a sparse support. 
%It is worth pointing out that in Figure~\ref{F:comparison SGD SAGA FB}, one iterate corresponds to visiting one data $(x_{i(k)},y_{i(k)})$ for SAGA, while it costs a full pass on the data for FB, which is in our case $50$ times more expensive.
%Since SAGA and FB identify some support after approximately $10^4$ and $10^3$ iterations, it means that SAGA actually identifies this support with a computational cost $5$ times smaller.
Another observation is that FB and SAGA identify a support which is larger than the one of $w_0$ but below the extended one governed by $\eta_0$,    which is in agreement with Theorem~\ref{thm-ident-sgd}. 
A natural question is then: if we replace $w_0$ by another low-complexity vector, and consider other data, what can be said about the complexity of the obtained solution? This is discussed next.

\paragraph{Randomized experiments for SAGA.}

We now focus on SAGA, and look at the strata that its iterates can identify.
For the $\ell_1$ norm (resp. nuclear norm), we draw 1000 (resp. 200) realizations of $(w_0,(x_i,y_i)_{i=1}^n)$ exactly as before.
For each realization, we compute $\eta_0$ with high precision by using a solver.
We then select among the  realizations those for which $R_0(\Jj_{R^*}(M^*_{\eta_0}))$ belongs exactly to $\{10,20 \}$ for the $\ell_1$ norm (resp. to $\{4,7\}$ for the nuclear norm), and we apply the SAGA algorithm to these.
The evolution of $R_0(M_{ \widehat w^k })$ in these cases are plotted in Figure~\ref{F:various plots for SAGA}.

We see that for the realizations for which $R_0(\Jj_{R^*}(M^*_{\eta_0})) = R_0(M_{w_0})$ (the blue curves), the algorithm indeed identifies in finite time the stratum where  $w_0$ belongs. 
Otherwise, we see that the algorithm often identifies a stratum of the same dimension as that of $R_0(\Jj_{R^*}(M^*_{\eta_0}))$, or sometimes smaller, but which is always larger than $M_{w_0}$.
These observations are consistent with  the predictions of Theorem~\ref{thm-ident-sgd}.

%\todo{Jalal: This should be removed I think since Figure~\ref{F:histograms gap penalized problems} will. First, we look for each realization at the active strata of $\widehat{w}_{\lambda,n}$, the solution  of \eqref{eq-empirical-risk-bis}. To compute this solution with high precision, we use CVX, and we plot the histogram of the gap $R_0(\widehat{w}_{\lambda,n}) - R_0(w_0)$ for all realizations in Figure~\ref{F:histograms gap penalized problems}.
%It is interesting to observe that in practice, the irrepresentable condition discussed in Remark \ref{rem-irrepresentable-cond} is not often verified, and that we can only hope for identifying a stratum which is generally bigger than $M_{w_0}$.}

% !TEX root = ../stratificationAISTATS.tex

\section{Conclusion}

In this paper, we provided a fine and unified analysis for studying model stability/consistency, when considering empirical risk minimization with a mirror-stratifiable regularizer, and solving it with a stochastic algorithm.
We showed that, even in the absence of the irrepresentable condition, the low-complexity of an approximate empirical solution remains controlled by a dual certificate. Moreover, we proposed a general algorithmic framework in which stochastic algorithms inherit almost surely finite activity identification.

%%%%%%%%%%%%%%%%%%%%%%%%%%%%%%%%%
%\paragraph{Acknowledgements}

\bibliography{references}

\begin{thebibliography}{}

\bibitem[A.~Defazio and Lacoste-Julien, 2014]{DefBacLac14}
A.~Defazio, F.~B. and Lacoste-Julien, S. (2014).
\newblock Saga: A fast incremental gradient method with support for
  non-strongly convex composite objectives.
\newblock In {\em NIPS}.

\bibitem[Amelunxen et~al., 2014]{AmeLotCoyTro14}
Amelunxen, D., Lotz, M., McCoy, M.~B., and Tropp, J.~A. (2014).
\newblock Living on the edge: Phase transitions in convex programs with random
  data.
\newblock {\em Information and Inference: A Journal of the IMA}, 3(3):224--294.

\bibitem[Atchad{\'e} et~al., 2017]{AtcForMou17}
Atchad{\'e}, Y.~F., Fort, G., and Moulines, E. (2017).
\newblock On perturbed proximal gradient algorithms.
\newblock {\em J. Mach. Learn. Res}, 18(1):310--342.

\bibitem[Auslender and Teboulle, 2003]{Auslender03}
Auslender, A. and Teboulle, M. (2003).
\newblock {\em Asymptotic Cones and Functions in Optimization and Variational
  Inequalities}.
\newblock Springer.

\bibitem[Bach, 2008a]{bach2008group}
Bach, F. (2008a).
\newblock Consistency of the group {Lasso} and multiple kernel learning.
\newblock {\em The Journal of Machine Learning Research}, 9(Jun):1179--1225.

\bibitem[Bach, 2008b]{bach2008trace}
Bach, F. (2008b).
\newblock Consistency of trace norm minimization.
\newblock {\em The Journal of Machine Learning Research}, 9(Jun):1019--1048.

\bibitem[Bauschke and Combettes, 2011]{BauComv2}
Bauschke, H.~H. and Combettes, P.~L. (2011).
\newblock {\em Convex analysis and monotone operator theory in Hilbert spaces}.
\newblock Springer.

\bibitem[Candes and Recht, 2013]{CanRec13}
Candes, E. and Recht, B. (2013).
\newblock Simple bounds for recovering low-complexity models.
\newblock {\em Mathematical Programming}, 141(1-2):577--589.

\bibitem[Combettes and Pesquet, 2015]{CombettesStochastic15}
Combettes, P.~L. and Pesquet, J.-C. (2015).
\newblock Stochastic quasi-fej\'er block-coordinate fixed point iterations with
  random sweeping.
\newblock {\em SIAM Journal on Optimization}, 25(2):1221--1248.

\bibitem[Combettes and Pesquet, 2016]{CombettesStochastic16}
Combettes, P.~L. and Pesquet, J.-C. (2016).
\newblock Stochastic approximations and perturbations in forward-backward
  splitting for monotone operators.
\newblock {\em Pure and Applied Functional Analysis}, 1(1):13--37.

\bibitem[Daniilidis et~al., 2014]{daniilidis2013orthogonal}
Daniilidis, A., Drusvyatskiy, D., and Lewis, A.~S. (2014).
\newblock Orthogonal invariance and identifiability.
\newblock {\em SIAM Journal on Matrix Analysis and Applications},
  35(2):580--598.

\bibitem[Duval and Peyr{\'e}, 2017]{DuvPey17}
Duval, V. and Peyr{\'e}, G. (2017).
\newblock Sparse regularization on thin grids i: the lasso.
\newblock {\em Inverse Problems}, 33(5):055008.

\bibitem[Fadili et~al., 2017]{FadMalPey17}
Fadili, J., Malick, J., and Peyr{\'e}, G. (2017).
\newblock Sensitivity analysis for mirror-stratifiable convex functions.
\newblock {\em arXiv preprint arXiv:1707.03194}.

\bibitem[Fazel, 2002]{fazel2002matrix}
Fazel, M. (2002).
\newblock {\em Matrix Rank Minimization with Applications}.
\newblock PhD thesis, Stanford University.

\bibitem[Lee and Wright, 2012]{LeeWri12}
Lee, S. and Wright, S. (2012).
\newblock Manifold identification in dual averaging for regularized stochastic
  online learning.
\newblock {\em Journal of Machine Learning Research}, 13:1705--1744.

\bibitem[Poon et~al., 2018]{PooLiaSch18}
Poon, C., Liang, J., and Sch\"onlieb, C.-B. (2018).
\newblock Local convergence properties of saga/prox-svrg and acceleration.
\newblock {\em arXiv:1802.02554}.

\bibitem[Rosasco et~al., 2016]{RosVilVu16}
Rosasco, L., Villa, S., and V{\~u}, B.~C. (2016).
\newblock A stochastic inertial forward--backward splitting algorithm for
  multivariate monotone inclusions.
\newblock {\em Optimization}, 65(6):1293--1314.

\bibitem[Stewart, 1977]{Ste77}
Stewart, G. (1977).
\newblock On the perturbation of pseudo-inverses, projections and linear least
  squares problems.
\newblock {\em SIAM review}, 19(4):634--662.

\bibitem[Tibshirani, 1996]{tibshirani1996regre}
Tibshirani, R. (1996).
\newblock Regression shrinkage and selection via the {L}asso.
\newblock {\em Journal of the Royal Statistical Society. Series B.
  Methodological}, 58(1):267--288.

\bibitem[Tibshirani et~al., 2005]{tibshirani2005sparsity}
Tibshirani, R., Saunders, M., Rosset, S., Zhu, J., and Knight, K. (2005).
\newblock Sparsity and smoothness via the fused lasso.
\newblock {\em Journal of the Royal Statistical Society: Series B (Statistical
  Methodology)}, 67(1):91--108.

\bibitem[Vaiter et~al., 2014]{2014-vaiter-ps-consistency}
Vaiter, S., Peyr{\'e}, G., and Fadili, J. (2014).
\newblock Model consistency of partly smooth regularizers.
\newblock Preprint 00987293, HAL.
\newblock to appear in IEEE Trans. Inf. Theory.

\bibitem[Van~der Vaart, 1998]{Vaa98}
Van~der Vaart, A.~W. (1998).
\newblock {\em Asymptotic statistics}, volume~3.
\newblock Cambridge university press.

\bibitem[Xiao, 2010]{Xia10}
Xiao, L. (2010).
\newblock Dual averaging methods for regularized stochastic learning and online
  optimization.
\newblock {\em Journal of Machine Learning Research}, 11:2543--2596.

\bibitem[Xiao and Zhang, 2014]{XiaZha14}
Xiao, L. and Zhang, T. (2014).
\newblock A proximal stochastic gradient method with progressive variance
  reduction.
\newblock {\em SIAM Journal on Optimization}, 24(4):2057--2075.

\bibitem[Yuan and Lin, 2005]{yuan2005model}
Yuan, M. and Lin, Y. (2005).
\newblock Model selection and estimation in regression with grouped variables.
\newblock {\em Journal of the Royal Statistical Society: Series B},
  68(1):49--67.

\bibitem[Zhao and Yu, 2006]{zhao2006model}
Zhao, P. and Yu, B. (2006).
\newblock On model selection consistency of {Lasso}.
\newblock {\em The Journal of Machine Learning Research}, 7:2541--2563.

\end{thebibliography}
\newpage
\appendix

% !TEX root = ../stratificationAISTATS.tex

\twocolumn

\section{Supplementary material}

This is the supplementary material for the paper \textit{Model Consistency for Learning with Mirror-Stratifiable Regularizers}. It contains the detailed proofs of the propositions \ref{P:cv problem} and \ref{P:cv algo} of which are, as explained in Section \ref{S:sketch proof}, the building blocks of our two main results (Theorems~1 and 2). The supplementary is structured in three sections: Section\;\ref{SS:prequel to the proof of T:identification problem proba} gathers key technical lemmas; Section\;\ref{SS:proof of Theorem 1 - cv of primal variable} presents the proof of Proposition \ref{P:cv problem}; 
Section\;\ref{sec-proof-algo} presents the one of\;Proposition\;\ref{P:cv algo}.

We use the same notations here as introduced at the beginning of Section \ref{S:sketch proof}.
We also introduce what can viewed as the limit of~\eqref{eq-empirical-risk-bis} as $n \rightarrow +\infty$:
\eql{\label{eq-limitting-risk-lagr}
	\weight_\la \in \uArgmin{\weight \in \RR^p}  \la R(\weight) + \frac{1}{2}\dotp{C \weight}{\weight} - \dotp{u}{\weight}.
}
For any positive semi-definite matrix $A$, we also note the seminorm $\norm{\cdot}_A=\sqrt{\dotp{A\cdot}{\cdot}}$.

% ===================================
\subsection{Useful technical lemmas}\label{SS:prequel to the proof of T:identification problem proba}

Here we present a few technical lemmas.
The first gives us some control on how $\widehat{C}_n$ converges to $C$ (resp. $\widehat{u}_n$ converges to $u$) when the amount of data $n$ tends to $+\infty$, and the second provides us with some essential compactness on these sequences.
The third  provides us an important variational characterization of the set to which belongs $\eta_0$.
Finally, the last Lemma gives a useful estimate between $\widehat w_{\lambda,n}$ and $w_\lambda$.

\begin{lem}\label{L:properties of C_n in stochastic}
If $\lambda_n \sqrt{n/\log \log n} \to +\infty$ and $\EE\left[\vert \yp \vert^4\right] + \EE\left[\norm{\xp}^4\right] <+\infty$, then the following holds almost surely:
\begin{enumerate}[label=(\roman*)]
	\item\label{L:properties of C_n in stochastic:speed} $\max\{ \Vert \widehat u_n - u \Vert , \Vert \widehat C_n - C \Vert \} = o(\lambda_n)$,
	\item\label{L:properties of C_n in stochastic:equal range} for $n$ large enough, $\Im \widehat{C}_n = \Im C$,
	\item\label{L:properties of C_n in stochastic:cv pseudoinverse} $\widehat{C}_n^\dagger \rightarrow C^\dagger$ as $n \to +\infty$.
\end{enumerate}
\end{lem}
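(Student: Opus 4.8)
The plan is to prove the three items in order, since each relies on the previous one. The central tool is a law of the iterated logarithm (LIL) applied to the i.i.d.\ averages $\widehat C_n = (1/n)\sum_i x_i x_i^\top$ and $\widehat u_n = (1/n)\sum_i y_i x_i$. First I would set up item \ref{L:properties of C_n in stochastic:speed}. The fourth-moment hypotheses $\EE[\norm{\xp}^4]<+\infty$ and $\EE[|\yp|^4]<+\infty$ guarantee that the entries of the random matrix $x_ix_i^\top$ and of the random vector $y_ix_i$ have finite variance (the entries of $x_ix_i^\top$ are products $x^{(j)}x^{(k)}$, whose second moments are bounded by $\EE[\norm{\xp}^4]$ via Cauchy--Schwarz, and similarly $y_ix_i$ has finite second moment by Cauchy--Schwarz on $\EE[|\yp|^2\norm{\xp}^2]$). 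Applying the finite-dimensional LIL entrywise then yields, almost surely,
\[
\limsup_{n\to\infty} \sqrt{\frac{n}{\log\log n}}\,\norm{\widehat C_n - C} < +\infty,
\qquad
\limsup_{n\to\infty} \sqrt{\frac{n}{\log\log n}}\,\norm{\widehat u_n - u} < +\infty,
\]
so that $\max\{\norm{\widehat u_n-u},\norm{\widehat C_n-C}\} = O(\sqrt{(\log\log n)/n})$ a.s. Dividing by $\lambda_n$ and invoking the hypothesis $\lambda_n\sqrt{n/(\log\log n)}\to+\infty$ gives the desired $o(\lambda_n)$ bound.

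Next I would derive item \ref{L:properties of C_n in stochastic:equal range} from the convergence $\widehat C_n \to C$ established above. Since $C$ is positive semidefinite, decompose $\RR^p = \Im C \oplus \Ker C$. On the subspace $\Im C$ the operator $C$ is invertible with smallest nonzero eigenvalue $\mu>0$; by the perturbation bound $\norm{\widehat C_n - C}\to 0$, for $n$ large enough the restriction of $\widehat C_n$ to $\Im C$ remains invertible (eigenvalue perturbation keeps those eigenvalues bounded away from $0$), which forces $\Im C \subset \Im \widehat C_n$. The reverse inclusion is the delicate direction, and it is where I expect the main obstacle: a generic symmetric perturbation of a rank-deficient $C$ will be full-rank, so one cannot argue range equality from convergence alone. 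The resolution is structural: because $\widehat C_n = (1/n)\sum_i x_ix_i^\top$, one has $\Im\widehat C_n = \mathrm{span}\{x_1,\dots,x_n\}$, and every $x_i$ lies a.s.\ in $\Im C = \overline{\mathrm{supp}(\rho_x)}^{\,\mathrm{lin}}$, the linear span of the support of the distribution of $\xp$ (indeed $\EE_\rho[\norm{P_{\Ker C}\xp}^2] = \langle C, P_{\Ker C}\rangle = 0$ implies $\xp\in\Im C$ a.s.). Hence $\Im\widehat C_n \subset \Im C$ surely, and combined with the previous inclusion we obtain equality for $n$ large enough.

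Finally, item \ref{L:properties of C_n in stochastic:cv pseudoinverse} follows by combining \ref{L:properties of C_n in stochastic:speed} and \ref{L:properties of C_n in stochastic:equal range}. The Moore--Penrose pseudo-inverse is \emph{not} continuous in general, but it \emph{is} continuous along sequences whose rank (equivalently, range) is eventually constant and equal to that of the limit. Once $n$ is large enough that $\Im\widehat C_n = \Im C$, both $\widehat C_n$ and $C$ are invertible when restricted to the fixed subspace $\Im C$ and vanish on $\Ker C$; writing $\widehat C_n^\dagger$ and $C^\dagger$ as the inverses of these restrictions extended by zero on $\Ker C$, the standard bound for inverses of invertible operators gives
\[
\norm{\widehat C_n^\dagger - C^\dagger} \;\leq\; \norm{C^\dagger}\,\norm{\widehat C_n^\dagger}\,\norm{\widehat C_n - C} \;\longrightarrow\; 0,
\]
using that $\norm{\widehat C_n^\dagger}$ stays bounded (its nonzero eigenvalues converge to those of $C^\dagger$). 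This completes the proof. The only genuinely subtle point is the range-equality argument in \ref{L:properties of C_n in stochastic:equal range}, which must use the almost-sure containment $x_i\in\Im C$ rather than mere convergence; everything else is a routine application of the LIL and the continuity of the pseudo-inverse under fixed-rank perturbations.
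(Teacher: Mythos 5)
Your proposal follows essentially the same route as the paper's proof: item \ref{L:properties of C_n in stochastic:speed} via the law of the iterated logarithm (the paper applies an LIL for the vector/matrix-valued averages directly, you apply it entrywise, which is equivalent in finite dimension); item \ref{L:properties of C_n in stochastic:equal range} via the structural observation that $\xp \in \Im C$ almost surely, hence $\Im \widehat C_n \subset \Im C$ a.s.; and item \ref{L:properties of C_n in stochastic:cv pseudoinverse} via continuity of the pseudo-inverse under range-preserving perturbations (the paper cites a theorem of Stewart, you give the standard inverse-perturbation bound, which is valid once the ranges coincide and $\norm{\widehat C_n^\dagger}$ stays bounded).

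There is, however, one deduction in your item \ref{L:properties of C_n in stochastic:equal range} that is invalid as stated and needs to be reordered. You claim that, because the restriction of $\widehat C_n$ to $\Im C$ is invertible for $n$ large, this ``forces $\Im C \subset \Im \widehat C_n$''. That implication is false on its own: take $C = \mathrm{diag}(1,0)$ and $\widehat C = vv^\top$ with $v = (1,\varepsilon)^\top$; then $\norm{\widehat C - C}$ is small and the restriction of $\widehat C$ to $\Im C = \mathrm{span}(e_1)$ is the invertible scalar $1$, yet $\Im C \not\subset \Im \widehat C = \mathrm{span}(v)$. What the perturbation (Weyl) argument actually yields is only the rank lower bound $\rank \widehat C_n \geq \rank C$ for $n$ large. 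Your proof is repaired by running your two steps in the opposite order: first establish the structural inclusion $\Im \widehat C_n \subset \Im C$ a.s. (which gives $\rank \widehat C_n \leq \rank C$), then invoke the perturbation bound to get rank equality, and finally conclude that an inclusion of subspaces with equal dimensions is an equality. This is exactly the paper's argument, except that the paper obtains the rank lower bound from lower semicontinuity of the rank along $\widehat C_n \to C$ rather than from eigenvalue perturbation; both are fine. With this reordering, your argument is complete.
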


\begin{proof}
It can be seen (use the Young inequality) that
\begin{eqnarray*}
 \EE\left[\norm{\bold{xy}}^2\right]  & = & \EE\left[\vert \yp \vert^2 \norm{\xp}^2\right]\\ 
& \leq &  
\frac{1}{2}\EE\left[\vert \yp \vert^4\right] + \frac{1}{2}\EE\left[\norm{\xp}^4\right] <+\infty \\
 \text{ and } \EE\left[\norm{\bold{xx}^\top}^2\right] & =& \EE\left[\norm{\bold x}^4\right] < + \infty.
\end{eqnarray*}
We are then in a position to invoke the law of iterated logarithm \citep[Proposition~2.26]{Vaa98} to obtain that, with probability 1,
$$r_n\eqdef\max\{ \Vert \widehat u_n - u \Vert , \Vert \widehat C_n - C \Vert \} = O\pa{n^{-1/2}\sqrt{{\log \log n}}}.$$
Our assumption that $\lambda_n \sqrt{n/\log \log n} \to +\infty$ then entails item \ref{L:properties of C_n in stochastic:speed}.

We now turn to item \ref{L:properties of C_n in stochastic:equal range}.
Consider $w \in \Ker C$; it verifies by definition $\EE_\rho[ \xp \langle \xp,w \rangle ] =  0$.
%\begin{equation*}
%\int x \langle x,w \rangle \ d\rho(x) = 0.
%\end{equation*}
By taking the scalar product of this equality with $w$, we see that $(\forall x {~\sim~} \rho)$, \  $ \PP( \langle x,w\rangle =0)=1.$
%\begin{equation}\label{irc1}
%(\forall x \underset{\tiny i.i.d.}{\sim} \rho) \quad \PP( \langle x,w\rangle =0)=1.
%\end{equation}
Let ($w_1,...,w_d$) be a basis of $\Ker C$, where $d=\dim(\ker C)$. 
Then we  deduce that $(\forall x {~\sim~} \rho)$, \  $\PP((\forall i\in\{1,\ldots,d\}) \ \langle x,w_i\rangle =0)=1$.
In other words, $x \in (\Ker C)^\perp$ a.s., or, equivalently:
\begin{equation}\label{irc2}
(\forall x \underset{\tiny i.i.d.}{\sim} \rho) \quad \PP(x \in \Im C)=1.
\end{equation}
Now, observe that $\Im \widehat C_n = \Im\pa{\{x_i\}_{i=1}^n}$, so the following implication holds:
\begin{equation}\label{irc3}
\left[ (\forall i\in\{1,\ldots,n\}) \ x_i \in \Im C \right] \Rightarrow \Im \widehat C_n \subset \Im C.
\end{equation}
Since the $x_i$ are drawn i.i.d. from $\rho$, and are in finite number, we can combine \eqref{irc2} and \eqref{irc3} to obtain that 
\begin{eqnarray*}
\PP(\Im \widehat C_n \subset \Im C) & \geq & \PP((\forall i\in\{1,\ldots,n\}) \ x_i \in \Im C) \\
&=& \prod_{i=1}^n \PP(x_i \in \Im C) = 1 .
\end{eqnarray*}
We deduce then that $\Im \widehat C_n \subset \Im C$ a.s., from which we get that $\rank \widehat{C}_k \leq \rank C$ a.s. This, together with lower semi-continuity of the rank, yields that with probability 1,
\begin{eqnarray*}
\rank (C) \leq \liminf\limits_{k \to +\infty} \rank (\widehat{C}_k) 
 & \leq &  \limsup\limits_{k \to +\infty} \rank (\widehat{C}_k) \\
 &  \leq  & \rank C \ \,
\end{eqnarray*}
meaning that $\rank (\widehat{C}_k) \to \rank (C)$ a.s.
Because the rank takes only discrete values, this means that $ \rank \widehat{C}_k = \rank C$ a.s. for all $k$ large enough.
We can then trivially deduce from the inclusion $\Im \widehat{C}_k \subset \Im C$ a.s., that the equality $\Im \widehat{C}_k = \Im C$ holds a.s. for $k$ large enough.

Assertion~\ref{L:properties of C_n in stochastic:cv pseudoinverse} follows from \ref{L:properties of C_n in stochastic:equal range} and \citep[Theorem~3.3]{Ste77}.
\end{proof}

%The next lemma features $R^\infty$, the asymptotic or recession function of $R$; see~\cite[Definition 10.32]{BauComv2}. Observe that since $R$ is bounded below, then by \cite[Exercise~10.18(iv)]{BauComv2}, the first assumption equivalently reads $R^\infty(\delta) > 0$ for all $\delta \in \ker C \setminus \ens{0}$. This assumption is very mild since it just requires (roughly speaking) $R$ to be coercive in all non-zero directions in $\Ker C$. %which trivially holds for instance if $R$ is coercive or $C$ is definite positive.

\begin{lem}\label{L:boundedness primal pb}
Assume that \eqref{H:hypothesis on model} holds, and
\begin{itemize}
	\item $\Im \widehat{C}_n = \Im C$ for $n$ large enough,
	\item $\sup_{n \in \NN}\widehat{C}_n^\dagger < +\infty$.
\end{itemize}
Then,  the sequences $(\widehat w_{\lambda_n,n})_{n\in\NN}$ and $( w_{\lambda_n})_{n\in\NN}$ are bounded. 
\end{lem}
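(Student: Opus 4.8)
The plan is to prove boundedness of $(\widehat w_{\lambda_n,n})_{n}$ in detail; the sequence $(w_{\lambda_n})_n$ is then handled by the verbatim same argument, with $(\widehat C_n,\widehat u_n)$ replaced by the fixed pair $(C,u)$ (for which the two bulleted hypotheses hold trivially, and $u=Cw_0\in\Im C$). Throughout I write $\widehat w_n\eqdef\widehat w_{\lambda_n,n}$ and decompose $\RR^p=\Im C\oplus\Ker C$ orthogonally, $\widehat w_n=\widehat w_n^\parallel+\widehat w_n^\perp$ with $\widehat w_n^\parallel\in\Im C$ and $\widehat w_n^\perp\in\Ker C$. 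The key starting observation is that the data term $Q_n(w)\eqdef\tfrac12\dotp{\widehat C_n w}{w}-\dotp{\widehat u_n}{w}$ sees \emph{only} the range component: since $\widehat u_n$ is a combination of the $x_i$ and $\Im\widehat C_n=\mathrm{span}\{x_i\}=\Im C$, one has $\widehat u_n\in\Im C$, so $Q_n$ is constant along $\Ker C$. Because $R$ is bounded below by \eqref{H:hypothesis on model}, the two components of $\widehat w_n$ will be controlled by two entirely different mechanisms.

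First I would bound the range component using the quadratic. Completing the square, $Q_n(w)=\tfrac12\norm{w-\widehat a_n}_{\widehat C_n}^2-\tfrac12\norm{\widehat a_n}_{\widehat C_n}^2$ with $\widehat a_n\eqdef\widehat C_n^\dagger\widehat u_n\in\Im C$; the hypothesis $\sup_n\norm{\widehat C_n^\dagger}<+\infty$ makes $(\widehat a_n)$ bounded and forces $\widehat C_n\succeq\norm{\widehat C_n^\dagger}^{-1}\mathrm{Id}$ uniformly on $\Im C$ (its smallest nonzero eigenvalue is $\norm{\widehat C_n^\dagger}^{-1}$). Writing the optimality of $\widehat w_n$ as $\lambda_n R(\widehat w_n)+Q_n(\widehat w_n)\le\lambda_n R(w_0)+Q_n(w_0)$, and using $R(\widehat w_n)\ge\inf R>-\infty$ together with the boundedness of $\lambda_n$, $\widehat C_n$ and $\widehat u_n$ (the last two converge a.s.\ by the law of large numbers), I obtain that $\norm{\widehat w_n-\widehat a_n}_{\widehat C_n}^2$ is bounded; since this seminorm dominates $\norm{\widehat w_n^\parallel-\widehat a_n}$ on $\Im C$ and $\widehat a_n$ is bounded, $\norm{\widehat w_n^\parallel}$ is bounded.

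The crux is the kernel component. The temptation is to bound $\widehat w_n^\perp$ the same way, but the energy comparison with $w_0$ only yields $\norm{\widehat w_n}=O(1/\lambda_n)$, which is useless as $\lambda_n\to0$; the essential point is that $\lambda_n$ is irrelevant here. Indeed, since $Q_n$ ignores $\Ker C$, minimality forces $\widehat w_n^\perp$ to minimize $v\mapsto R(\widehat w_n^\parallel+v)$ over $v\in\Ker C$. I would then extract coercivity of $R$ along $\Ker C$ from uniqueness in \eqref{H:hypothesis on model}: the feasible set of \eqref{Eq:primal expected problem} is the coset $w_0+\Ker C$, so $w_0$ being its \emph{unique} minimizer means $v=0$ is the unique minimizer of $R(w_0+\cdot)$ on $\Ker C$; for a convex lsc function this forces $R_\infty(d)>0$ for every $d\in\Ker C\setminus\{0\}$ (else the recession bound $R(w_0+td)\le R(w_0)+tR_\infty(d)$ would produce a second minimizer). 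Now suppose $\norm{\widehat w_n^\perp}\to+\infty$ along a subsequence; as $\widehat w_n^\parallel$ is bounded, $\norm{\widehat w_n}\to+\infty$ and $\widehat w_n/\norm{\widehat w_n}\to d$ for some unit $d\in\Ker C$. Comparing $\widehat w_n$ with the competitor $w_0+(\widehat w_n^\parallel-w_0^\parallel)$, which lies in the same coset, gives $R(\widehat w_n)\le R(w_0+b_n)$ with $b_n\in\Im C$ bounded, whence $\sup_n R(\widehat w_n)=:\Phi<+\infty$. Thus $\widehat w_n\in\{R\le\Phi\}$ escapes to infinity in the direction $d$, so $d$ is a recession direction of this sublevel set, i.e.\ $R_\infty(d)\le0$, contradicting $R_\infty(d)>0$. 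Hence $\widehat w_n^\perp$ is bounded, and combined with the range bound, $(\widehat w_n)$ is bounded.

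The hard part is exactly this third step: recognizing that the $\lambda_n$-weighted estimate is too weak in the kernel directions and must be replaced by the exact partial-minimization structure together with the recession consequence of uniqueness. A secondary technical point is the uniform bound $\sup_n R(\widehat w_n)<+\infty$: it is immediate when $\dom R=\RR^p$ (the case of all regularizers in Example~\ref{Ex:strata}, where $R$ is finite and continuous), and in general follows from local boundedness of $R$ near $w_0$ applied to the same-coset competitor $w_0+b_n$.
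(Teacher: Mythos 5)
Your decomposition route ($\RR^p=\Im C\oplus\Ker C$, with the data term blind to $\Ker C$) is genuinely different from the paper's proof, and most of it is sound: the range-component bound via the smallest nonzero eigenvalue of $\widehat C_n$, and the derivation of $R^\infty(d)>0$ for every $d\in\Ker C\setminus\{0\}$ from uniqueness in \eqref{H:hypothesis on model}, are both correct --- the latter is exactly the paper's relation $\Ker R^\infty\cap\Ker C=\{0\}$, which the paper obtains through Auslender's recession calculus and then exploits via level-boundedness of a fixed auxiliary function rather than via your subsequential compactness argument. For the deterministic sequence $(w_{\lambda_n})_{n}$ your proof is even complete as it stands, since there $w_0$ exactly minimizes the quadratic term, so comparing with $w_0$ immediately gives $R(w_{\lambda_n})\le R(w_0)$.

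The gap is in what you call a secondary technical point, and for the empirical sequence it is not secondary. You need $\Phi\eqdef\sup_n R(\widehat w_{\lambda_n,n})<+\infty$, and you get it from $R(\widehat w_{\lambda_n,n})\le R(w_0+b_n)$ with $b_n$ only \emph{bounded}. In the lemma's generality $R$ is an arbitrary proper lsc convex (mirror-stratifiable) function, so $\dom R\neq\RR^p$ is allowed (polyhedral regularizers with indicator parts fall under the paper's framework); then (i) local boundedness of $R$ near $w_0$ fails unless $w_0$ is interior to $\dom R$, and (ii) even granting it, it says nothing about $w_0+b_n$ because $b_n$ is not small --- you cannot assert $b_n\to 0$ at this stage without circularity, since $\widehat w_{\lambda_n,n}\to w_0$ is Proposition~\ref{P:cv problem}, whose proof relies on this lemma. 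So $R(w_0+b_n)$ may equal $+\infty$ and your comparison becomes vacuous. The repair is the paper's comparison with $w_0$ itself, which needs the rate you deliberately avoided (it is legitimately available, since the lemma is invoked in a context where Lemma~\ref{L:properties of C_n in stochastic}\ref{L:properties of C_n in stochastic:speed} holds): with $f_{\lambda,n}(w)\eqdef R(w)+\frac{1}{2\lambda}\norm{\widehat C_n w-\widehat u_n}^2_{\widehat C_n^\dagger}$, which has the same minimizers as \eqref{eq-empirical-risk-bis} because $\widehat u_n\in\Im\widehat C_n$, optimality gives $R(\widehat w_{\lambda_n,n})\le f_{\lambda_n,n}(\widehat w_{\lambda_n,n})\le f_{\lambda_n,n}(w_0)\le R(w_0)+\frac{\norm{\widehat C_n^\dagger}}{2\lambda_n}\,O(r_n^2)=R(w_0)+o(1)$, where $r_n\eqdef\max\{\norm{\widehat u_n-u},\norm{\widehat C_n-C}\}=o(\lambda_n)$ a.s. Substituting this for your coset comparison makes your proof complete in full generality; as written, it covers only the case $\dom R=\RR^p$.
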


\begin{proof}

Introduce $f_{\lambda}(w) \eqdef R(w) + (1/2\lambda) \norm{C w - u}^2_{C^\dagger}$ and $ f_{\lambda,n}(w)\eqdef R(w) + (1/2\lambda) \norm{\widehat C_n w - \widehat u_n}^2_{\widehat C_n^\dagger}$
%\begin{equation*}
%f_{\lambda}(w) \eqdef R(w) + (1/2\lambda) \norm{C w - u}^2_{C^\dagger} \text{ and } f_{\lambda,n}(w)\eqdef R(w) + (1/2\lambda) \norm{\widehat C_n w - \widehat u_n}^2_{\widehat C_n^\dagger},
%\end{equation*}
which, by definition, verify 
\begin{equation*}
w_{\lambda_n} \in \Argmin f_{\lambda_n} \text{ and } \widehat w_{\lambda_n,n} \in \Argmin f_{\lambda_n,n}.
\end{equation*}
Define $\supla \eqdef \sup_{n} \lambda_n > 0$, and use the optimality of $\widehat w_{\lambda_n,n}$ to derive
\begin{equation*}\label{eq:slevfn1}
f_{\supla,n}(\widehat w_{\lambda_n,n}) \leq
f_{\lambda_n,n}(\widehat w_{\lambda_n,n})
\leq f_n(w_0)
\end{equation*}
By making use of  Lemma~\ref{L:properties of C_n in stochastic}.\ref{L:properties of C_n in stochastic:cv pseudoinverse} and  Lemma~\ref{L:properties of C_n in stochastic}.\ref{L:properties of C_n in stochastic:speed}, we have the bound
\begin{eqnarray*}
f_n(w_0) 
& \leq & R(w_0) + \frac{\norm{\widehat C_n^\dagger}}{2\lambda_n} \norm{\widehat C_n w_0 - \widehat u_n}^2, \\
& \leq & R(w_0) + O\pa{\frac{\norm{\widehat C_n -C} + \norm{u - \widehat u_n}}{\lambda_n}}^2, \nonumber\\
& \leq & R(w_0) + o(1).
\end{eqnarray*}
We can make a similar reasoning on the sequence $( w_{\lambda_n})_{n\in\NN}$, and deduce that
\begin{eqnarray}
f_{\supla}(w_{\lambda_n})  \leq R(w_0) + o(1), \label{eq:slevfn2.1} \\ 
\text{ and }
f_{\supla,n}(\widehat w_{\lambda_n,n}) \leq R(w_0) + o(1). \label{eq:slevfn2.2}
\end{eqnarray}

To prove the boundedness of $(\widehat w_{\lambda_n,n})_{n\in\NN}$ and $( w_{\lambda_n})_{n\in\NN}$, we will  use arguments relying on  the notion of asymptotic or recession function; see~\cite[Definition 10.32]{BauComv2} for a definition.
Define $f_0(w) \eqdef R(w) + \iota_{\{u\}}(Cw)$, where $\iota_{\{u\}}$ is the indicator function\footnote{The indicator function $\iota_\Omega$ of a set $\Omega\subset \RR^p$ is by definition equal to $0$ when evaluated on $\Omega$, and $+\infty$ elsewhere.} of the singleton $\{u\}$.
The hypothesis \eqref{H:hypothesis on model} indicates that $\argmin f_0 = \{w_0\}$, so in particular $\argmin f_0$ is compact.
We can then invoke \cite[Proposition~3.1.2 and~3.1.3]{Auslender03} to deduce that $f_0^\infty(w) > 0$ for all $w \in \RR^p \setminus \{0\}$, where $f_0^\infty$ is  the recession function of $f_0$.
From the sum rule \cite[Proposition~2.6.1]{Auslender03}, we deduce that $f_0^\infty = R^\infty + (\iota_{\{u\}} \circ C)^\infty$.
Moreover, we know from \eqref{H:hypothesis on model} that $u \in \Im C$, so we can use \cite[Proposition~2.6.1]{Auslender03} to get $(\iota_{\{u\}} \circ C)^\infty = \iota_{\{0\}} \circ C = \iota_{\Ker C}$.
We deduce from all this that $R^\infty(w) > 0$ for all $w \in \Ker C \setminus \{0\}$, which can be equivalently reformulated as
\begin{equation}\label{eq:slevfn0}
\Ker R^\infty \cap \Ker C = \{0\}.
\end{equation}

Let us start with the boundedness of $( w_{\lambda_n})_{n\in\NN}$.
Combining~\cite[Proposition~2.6.1]{Auslender03}, \cite[Example~2.5.1]{Auslender03} and the fact that $u \in \Im C$, the recession function of $f_{\supla}$ reads $f_{\supla}^\infty(w)=R^\infty(w)$ if $w \in \ker C$ and $+\infty$ otherwise. 
Thus, \eqref{eq:slevfn0} is equivalent to $f_{\supla}^\infty(w) > 0$ for all $w \neq 0$.
%. In view of \cite[Proposition~3.1.3]{Auslender03}, we deduce that $\Argmin f_{\supla}$ is nonempty and compact, an
This is equivalent to saying that $f_{\supla}$ is level-bounded (see \cite[Proposition~3.1.3]{Auslender03}), from which we deduce boundedness of $( w_{\lambda_n})_{n\in\NN}$ via \eqref{eq:slevfn2.1} and \eqref{eq:slevfn2.2}.

We now turn on $(\widehat w_{\lambda_n,n})_{n\in\NN}$.
 We write $\widehat u_n = C \widehat p_n$ since $\widehat u_n \in \Im \widehat C_n \subset \Im C$. We first observe that \eqref{eq:slevfn2.1} and \eqref{eq:slevfn2.2} can be rewritten as:
\begin{equation*}
\frac{1}{2\supla} \norm{\widehat C_n (\widehat w_{\lambda_n,n} - \widehat p_n)}^2_{\widehat C_n^\dagger} + R(\widehat w_{\lambda_n,n}) 
%\leq f_n(\widehat w_{\lambda_n,n})
 \leq R(w_0) + o(1).
\end{equation*}
Let $V_n \diag(s_{n,i}) V_n^\top$ be a (reduced) eigendecomposition of $\widehat C_n$. By our assumptions, we have $\infs \eqdef \inf_{n,1 \leq i \leq r} s_{n,i} = \bpa{\sup_n \norm{\widehat C_n}}^{-1} > 0$. In addition, the columns of $V_n$ form an orthonormal basis of $\Im C$ for $n$ large enough. Thus, for all such $n$, we have
\begin{eqnarray*}
& & \infs \norm{\Proj_{\Im C}(\widehat w_{\lambda_n,n}-\widehat p_n)}^2 \\
& = &  \infs \norm{V_n^\top (\widehat w_{\lambda_n,n}-\widehat p_n)}^2 \\
&\leq & \sum_{i=1}^r s_{n,i} \abs{\dotp{v_{n,i}}{\widehat w_{\lambda_n,n}-\widehat p_n}}^2 \\
&= & \dotp{\widehat C_n(\widehat w_{\lambda_n,n}-\widehat p_n)}{\widehat w_{\lambda_n,n}-\widehat p_n} \\
&= & \norm{\widehat C_n (\widehat w_{\lambda_n,n} - \widehat p_n)}^2_{\widehat C_n^\dagger} .
\end{eqnarray*}
Altogether, we get the bound
\[
\frac{\infs}{2\supla} \norm{\Proj_{\Im C} (\widehat w_{\lambda_n,n} - \widehat p_n)}^2 + R(\widehat w_{\lambda_n,n}) \leq R(w_0) + o(1)
\]
for $n$ sufficiently large. Arguing as above, the recession function of $g \eqdef \frac{\infs}{2\supla}\norm{\cdot - \widehat p_n}^2 \circ \Proj_{\Im C} + R$ is again $g^\infty(w) = R^\infty(w)$ if $w \in \ker C$ and $+\infty$ otherwise, independently of $\widehat p_n$\footnote{This reflects the geometric fact that the recession function is unaffected by translation of the argument.}. Our assumption plugged into \cite[Proposition~3.1.3]{Auslender03} entails that $g$ is level-bounded and thus boundedness for $(\widehat w_{\lambda_n,n})_{n\in\NN}$.
\end{proof}

\begin{lem}\label{L:limit solutions belong to partial R}
Assume that \eqref{H:hypothesis on model} holds.
Then
\begin{equation*}
 \uArgmin{\eta \in \Im C} R^*(\eta) - \dotp{C^\dagger u}{\eta} = \partial R(w_0) \cap \Im C.
\end{equation*}
\end{lem}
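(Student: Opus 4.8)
The plan is to characterize both sets through a single Fenchel--Young argument, which has the pleasant feature of sidestepping any constraint qualification. Write the objective as $\phi(\eta) \eqdef R^*(\eta) - \dotp{C^\dagger u}{\eta}$, to be minimized over the subspace $\Im C$. The key preliminary observation is that $w_0$ is feasible for \eqref{Eq:primal expected problem}, i.e. $C w_0 = u$, so in particular $u \in \Im C$ and hence $C C^\dagger u = u$. Consequently $C(w_0 - C^\dagger u) = u - u = 0$, that is $w_0 - C^\dagger u \in \Ker C = (\Im C)^\perp$ (using that $C$ is symmetric). This orthogonality is exactly what will make the linear term of $\phi$ collapse when tested against $\Im C$.

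First I would apply the Fenchel--Young inequality at $w_0$: for every $\eta$ one has $R^*(\eta) \geq \dotp{\eta}{w_0} - R(w_0)$, with equality if and only if $\eta \in \partial R(w_0)$. Subtracting $\dotp{C^\dagger u}{\eta}$ and restricting to $\eta \in \Im C$, the cross term becomes $\dotp{\eta}{w_0 - C^\dagger u} = 0$ by the orthogonality above, which leaves
\[
\phi(\eta) \geq -R(w_0) \qquad \text{for all } \eta \in \Im C,
\]
and the inequality is tight precisely when the Fenchel--Young step is, namely when $\eta \in \partial R(w_0)$. Thus $-R(w_0)$ is a uniform lower bound for $\phi$ on $\Im C$, attained exactly on $\partial R(w_0) \cap \Im C$. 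Note that both inclusions of the claimed identity fall out of this single equivalence simultaneously, without any qualification.

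It then remains to verify that this lower bound is actually the minimum, which reduces to showing $\partial R(w_0) \cap \Im C \neq \emptyset$ so that the value $-R(w_0)$ is genuinely attained. This is nothing but the optimality condition for $w_0$ as a minimizer of $R$ over the affine subspace $\{w : C w = u\} = w_0 + \Ker C$: Fermat's rule gives $0 \in \partial\bpa{R + \iota_{w_0 + \Ker C}}(w_0)$, and since the normal cone of this affine subspace at $w_0$ is $(\Ker C)^\perp = \Im C$, one extracts an element of $\partial R(w_0) \cap \Im C$ --- the dual certificate that makes $\eta_0$ in \eqref{E:dual expected pb} well-defined in the first place. Once nonemptiness is secured, $\min_{\eta \in \Im C} \phi(\eta) = -R(w_0)$ and
\[
\uArgmin{\eta \in \Im C} \phi(\eta) = \enscond{\eta \in \Im C}{\phi(\eta) = -R(w_0)} = \partial R(w_0) \cap \Im C,
\]
which is the assertion.

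The only delicate point I anticipate is this last attainment step: extracting the certificate from Fermat's rule formally invokes the sum rule $\partial\bpa{R + \iota_{w_0 + \Ker C}} = \partial R + \Im C$, which calls for a qualification such as $\ri(\dom R) \cap (w_0 + \Ker C) \neq \emptyset$. Because $\iota_{w_0 + \Ker C}$ is polyhedral this is only a mild polyhedral qualification, and in any event the nonemptiness of $\partial R(w_0) \cap \Im C$ is precisely the standing feature of the setup that renders $\eta_0$ well-defined, so it can be taken for granted here. I therefore expect the nonemptiness/attainment bookkeeping to be the main obstacle, whereas the heart of the statement --- the exact match between the minimizers and $\partial R(w_0) \cap \Im C$ --- emerges effortlessly from the equality case of Fenchel--Young.
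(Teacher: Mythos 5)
Your proof is correct, but it takes a genuinely different route from the paper's. The paper recognizes $\min_{\eta \in \Im C} R^*(\eta) - \dotp{C^\dagger u}{\eta}$ as the Fenchel dual of \eqref{Eq:primal expected problem} by invoking \citep[Proposition~13.23 \& Theorem~15.27]{BauComv2}, and then reads the identity off the Kuhn--Tucker characterization of primal-dual pairs: $(w^\star,\eta^\star)$ is optimal iff $w^\star \in C^\dagger u + \Ker C$ and $\eta^\star \in \partial R(w^\star) \cap \Im C$, so uniqueness of $w_0$ forces the dual solution set to be $\partial R(w_0) \cap \Im C$. You instead do the computation bare-hands: the orthogonality $w_0 - C^\dagger u \in \Ker C = (\Im C)^\perp$ converts the objective on $\Im C$ into $R^*(\eta) - \dotp{w_0}{\eta}$, and the equality case of Fenchel--Young then characterizes the minimizers as $\partial R(w_0) \cap \Im C$ in one stroke. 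What your version buys is self-containedness and transparency about where constraint qualifications enter: none are needed for the equality-case characterization, only for attainment, i.e.\ nonemptiness of $\partial R(w_0) \cap \Im C$. What the paper's version buys is brevity and coherence with the duality machinery it reuses later for the empirical dual problem; but note that the cited duality theorems carry their own qualification hypotheses, so the attainment issue you flag is not avoided there either, merely relocated into the citation. Your resolution of that issue is legitimate in context: the standing setup of the paper requires $\partial R(w_0) \cap \Im C \neq \emptyset$ for $\eta_0$ in \eqref{E:dual expected pb} to exist at all (and for finite-valued regularizers such as the $\ell_1$ or nuclear norms, $\dom R = \RR^p$ and your polyhedral qualification for Fermat's rule holds trivially). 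One caveat worth stating explicitly if you write this up: without nonemptiness, your argument alone would not exclude the degenerate scenario where the infimum exceeds $-R(w_0)$ and is attained, in which case the asserted set identity would fail; so the attainment step is not mere bookkeeping but genuinely part of the statement.
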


\begin{proof}
Using \citep[Proposition~13.23 \& Theorem~15.27]{BauComv2}, one can check that problem 
\[
\umin{\eta \in \Im C} R^*(\eta) - \dotp{C^\dagger u}{\eta}
\]
is the Fenchel dual of \eqref{Eq:primal expected problem}. Moreover, $(w^\star,\eta^\star)$ is a primal-dual (Kuhn-Tucker) optimal pair if and only if
\[
\begin{pmatrix}
w^\star \\
\eta^\star
\end{pmatrix} \in
\begin{pmatrix}
C^\dagger u + \ker C \\
\partial R(w^\star) \cap \Im C
\end{pmatrix} .
\]
As we assumed in \eqref{H:hypothesis on model} that $w_0$ is the unique minimizer of \eqref{Eq:primal expected problem}, the claimed identity follows.
\end{proof}

\begin{lem}\label{L:estimate primal pb}
%Let $\alpha_n$ be the angle between $\Im C_n$ and $\Im C^\perp$:
%\begin{equation*}
%\alpha_n\eqdef  \sup \left\{ \langle u,v \rangle : u \in \Im C_n, v \in \Im C^\perp, \Vert u \Vert = \Vert v \Vert = 1 \right\}.
%\end{equation*}
Let $n\in \NN$ and assume that $\Im \widehat C_n \subset  \Im C$.
%Note $P$ the orthogonal projection onto $\Im C$, and 
Denote $r_n\eqdef \max\ens{\norm{\widehat u_n - u}, \norm{\widehat C_n - C}}$.
Then,
\begin{equation*}
\norm{C(\widehat w_{\lambda,n} - w_\lambda)} \leq (\norm{C}\norm{C^\dagger})^{1/2} (1+\norm{\widehat w_{\lambda,n}})r_n.
\end{equation*}
%{\color{blue}Result might be improved by playing with the convergence between the projectors on the ranges, or their angle.}
\end{lem}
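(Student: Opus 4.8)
The plan is to compare the two optimizers $w_\lambda$ and $\widehat w_{\lambda,n}$ through the first-order optimality conditions of their respective problems \eqref{eq-limitting-risk-lagr} and \eqref{eq-empirical-risk-bis}, and then to exploit monotonicity of $\partial R$. Writing $a \eqdef w_\lambda$, $b \eqdef \widehat w_{\lambda,n}$ and $d \eqdef b-a$, the optimality conditions read $\lambda^{-1}(u - Ca) \in \partial R(a)$ and $\lambda^{-1}(\widehat u_n - \widehat C_n b) \in \partial R(b)$. Applying monotonicity of the maximal monotone operator $\partial R$ to these two subgradients (after clearing the factor $\lambda>0$) gives $\dotp{(u - Ca) - (\widehat u_n - \widehat C_n b)}{a-b} \geq 0$. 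Splitting $Ca - \widehat C_n b = C(a-b) + (C - \widehat C_n)b$ and rearranging, this becomes
\[
\norm{d}_C^2 = \dotp{Cd}{d} \leq \dotp{e}{d}, \qquad e \eqdef (\widehat u_n - u) + (C - \widehat C_n)b .
\]

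The next step is to observe that the residual $e$ lies in $\Im C$: indeed $u \in \Im C$ (feasibility in \eqref{Eq:primal expected problem}), $\widehat u_n \in \Im \widehat C_n \subset \Im C$ by hypothesis, while $Cb \in \Im C$ and $\widehat C_n b \in \Im \widehat C_n \subset \Im C$. This range condition is exactly what makes the $C^\dagger$-seminorm meaningful on $e$ and lets me run a generalized Cauchy--Schwarz inequality restricted to $\Im C$: since $\dotp{e}{d} = \dotp{e}{\Proj_{\Im C} d}$, I would bound $\dotp{e}{d} \leq \norm{e}_{C^\dagger}\norm{d}_C$. Plugging this into the displayed inequality and dividing by $\norm{d}_C$ (the degenerate case $\norm{d}_C=0$ forcing $Cd=0$, so that the target inequality holds trivially) yields $\norm{d}_C \leq \norm{e}_{C^\dagger} \leq \norm{C^\dagger}^{1/2}\norm{e}$.

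It then remains to pass from the energy seminorm $\norm{d}_C$ back to the Euclidean norm $\norm{Cd}$ appearing in the statement. Using a reduced eigendecomposition $C = V\diag(s_i)V^\top$ with $V$ having orthonormal columns spanning $\Im C$, and writing $\tilde d = V^\top d$, one has $\norm{Cd}^2 = \sum_i s_i^2 \tilde d_i^2 \leq \norm{C}\sum_i s_i \tilde d_i^2 = \norm{C}\norm{d}_C^2$, i.e. $\norm{Cd} \leq \norm{C}^{1/2}\norm{d}_C$. Combining the two estimates gives $\norm{Cd} \leq (\norm{C}\norm{C^\dagger})^{1/2}\norm{e}$, and the triangle inequality $\norm{e} \leq \norm{\widehat u_n - u} + \norm{C - \widehat C_n}\norm{b} \leq r_n\pa{1 + \norm{\widehat w_{\lambda,n}}}$ closes the argument.

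I expect the only genuinely delicate point to be the bookkeeping of the range conditions together with the two norm conversions ($\norm{Cd} \leftrightarrow \norm{d}_C$ through $\norm{C}$, and $\dotp{e}{d} \leftrightarrow \norm{d}_C$ through $\norm{C^\dagger}$); both rely on the fact that $e$ and the relevant component of $d$ live in $\Im C$, where $C$ restricts to a positive-definite operator whose extreme eigenvalues are precisely $\norm{C^\dagger}^{-1}$ and $\norm{C}$. Everything else is the standard monotonicity estimate for perturbed variational problems.
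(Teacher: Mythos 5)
Your proof is correct and follows essentially the same route as the paper's: first-order optimality conditions plus monotonicity of $\partial R$, the range conditions ($\Im \widehat C_n \subset \Im C$, $u \in \Im C$) to justify a Cauchy--Schwarz inequality in the $C^\dagger$-seminorm, and eigenvalue bounds to convert between the $C$-seminorm, the $C^\dagger$-seminorm and the Euclidean norm. The only cosmetic differences are that you carry the quantity $\norm{d}_C$ where the paper writes $\norm{Cd}_{C^\dagger}$ (these coincide, since $\dotp{C^\dagger Cd}{Cd}=\dotp{Cd}{d}$), and that you explicitly handle the degenerate case $\norm{d}_C=0$, which the paper glosses over.
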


\begin{proof}
The first-order optimality conditions for both $\widehat w_{\lambda,n}$ and $w_\lambda$ yield
\[\left\{
\begin{array}{rl}
  0 & \in \lambda \partial R(\widehat w_{\lambda,n}) + \widehat C_n \widehat w_{\lambda,n} - \widehat u_n \\
0 & \in \lambda \partial R( w_{\lambda}) +  C  w_{\lambda} -  u.
\end{array}
\right.\]
In view of monotonicity of $\partial R$, we deduce that 
\begin{equation*}
0 \leq \dotp{\widehat u_n - u + C w_\lambda - \widehat C_n \widehat w_{\lambda,n}}{\widehat w_{\lambda,n}  - w_\lambda}.
\end{equation*}
Rearranging the terms, we get
\begin{eqnarray}
& & \dotp{C(\widehat w_{\lambda,n} - w_\lambda)}{\widehat w_{\lambda,n}  - w_\lambda} \label{epp1} \\
& \leq & \dotp{\widehat u_n - u+ (C- \widehat C_n)\widehat w_{\lambda,n}}{\widehat w_{\lambda,n}  - w_\lambda} \nonumber. 
\end{eqnarray}
By virtue of standard properties of the Moore-Penrose pseudo-inverse and the fact that $\widehat u_n - u$ and $C-\widehat C_n$ both live in $\Im C \supset \Im \widehat C_n$, we obtain
\begin{eqnarray*}
& & \dotp{C^\dagger (C\widehat w_{\lambda,n} - Cw_\lambda)}{C\widehat w_{\lambda,n}  - C w_\lambda} \\
& \leq & \dotp{C^\dagger(\widehat u_n - u+ (C- \widehat C_n)\widehat w_{\lambda,n})}{C\widehat w_{\lambda,n}  - Cw_\lambda}. 
\end{eqnarray*}
Applying the Cauchy-Schwarz and triangle inequalities, we arrive at
\begin{eqnarray*}
& &\norm{C\widehat w_{\lambda,n} - Cw_\lambda}_{C^\dagger} \\
& \leq & \norm{\widehat u_n - u}_{C^\dagger} + \norm{(C- \widehat C_n)\widehat w_{\lambda,n}}_{C^\dagger} \\
& \leq & \norm{C^\dagger}^{1/2}\pa{\norm{\widehat u_n - u} + \norm{C - \widehat C_n}\norm{\widehat w_{\lambda,n}}} \\
& \leq & \norm{C^\dagger}^{1/2} (1+\norm{\widehat w_{\lambda,n}})r_n.
\end{eqnarray*}
On the left side of this inequality, we exploit the fact that $\norm{C}^{-1}$ is the smallest nonzero eigenvalue of $C^\dagger$ on $\Im(C)$ to conclude
\begin{eqnarray*}
\norm{C\widehat w_{\lambda,n} - Cw_\lambda}_{C^\dagger}
 & \leq & \norm{C}^{1/2}\norm{C\widehat w_{\lambda,n} - Cw_\lambda}_{C^\dagger} \\
& \leq & (\norm{C}\norm{C^\dagger})^{1/2} (1+\norm{\widehat w_{\lambda,n}})r_n .
\end{eqnarray*}
\end{proof}

% ===============================================================
\subsection{Proof of Proposition  \ref{P:cv problem}}\label{SS:proof of Theorem 1 - cv of primal variable}

%%%
\paragraph{Convergence of the primal variable.}
%Let $\widehat w_n^\dagger \eqdef \widehat C_n^\dagger \widehat u_n$, which by definition verifies $\widehat w_n^\dagger \in \argmin \{\Vert \widehat X_n w - \widehat y_n \Vert^2 : w \in \RR^p \}$.
%Then, we can use the optimality of $\widehat w_{n}$ to write
%$$\begin{array}{l}
%\quad  \lambda R(\widehat w_{n}) \\
% \leq  \lambda  R(\widehat w_{n}) + \frac{1}{2n} \Vert \widehat X_n \widehat w_{n} - \widehat y_n \Vert^2 - \inf_w \frac{1}{2n} \Vert \widehat X_n w - \widehat y_n \Vert^2 \\
% \leq  \lambda  R(\widehat w_{n}^\dagger) + \frac{1}{2n} \Vert \widehat X_n \widehat w_{n}^\dagger - \widehat y_n \Vert^2 - \inf_w \frac{1}{2n} \Vert \widehat X_n w - \widehat y_n \Vert^2 \\
% \leq \lambda R(\widehat w_n^\dagger).
%\end{array}$$
%After dividing by $\lambda_n > 0$, we deduce that 
%\begin{equation}\label{ident2}
%R(\widehat w_{n}) \leq R(\widehat w_n^\dagger).
%\end{equation}
%Using the law of large numbers, together with Hypothesis $(H_D)$ and Lemma \ref{L:cv pseudoinverse rank constant}, we can see that $\widehat w_n^\dagger$ converges almost surely to $C^\dagger u$.
%In particular, we can deduce from Hypothesis $(H_R)$ and \eqref{ident2} that $(\widehat w_{n})_{n \in \NN}$ is almost surely bounded, 
To lighten notations, we will write $\widehat w_n \eqdef \widehat w_{\lambda_n,n}$.
From Lemma~\ref{L:boundedness primal pb} we know that $(\widehat w_{n})_{n \in \NN}$ is bounded a.s., so it admits a cluster point, say $w^\star$.
Let $\widehat w_n$ be a subsequence (we do not relabel for simplicity) converging a.s. to $w^\star$.
Now, let $\varepsilon_n \eqdef \widehat u_{n} - \widehat C_{n} w_0$, for which we know that both $\varepsilon_n$ and $ \varepsilon_n / \lambda_n$ are $o(1)$, thanks to Lemma~\ref{L:properties of C_n in stochastic}\ref{L:properties of C_n in stochastic:speed} and the fact that $u = C w_0$.
From the optimality of $\widehat w_n$, we obtain
\begin{eqnarray*}
& & \lambda_n R(\widehat w_n) + \frac{1}{2}\langle \widehat C_n \widehat w_n, \widehat w_n \rangle - \langle \widehat u_n , \widehat w_n \rangle \\
& \leq & \lambda_n R( w_0) + \frac{1}{2}\langle  \widehat C_n w_0, w_0 \rangle - \langle  \widehat u_n , w_0 \rangle,
\end{eqnarray*}
which can be equivalently rewritten as
\begin{eqnarray}\label{ident3} 
& & \frac{1}{2}\langle \widehat C_n  (\widehat w_n - w_0), \widehat w_n - w_0 \rangle - \langle \widehat w_n - w_0, \varepsilon_n \rangle \hspace*{1cm} \\
& \leq & \lambda_n (R(w_0) - R(\widehat w_n)) \nonumber.
\end{eqnarray}
Passing to the limit in~\eqref{ident3} and using the fact that $R$ is bounded from below, we obtain 
\begin{equation*}
\dotp{C( w^\star - w_0 )}{w^\star - w_0} =0 \text{ a.s. },
\end{equation*}
 or equivalently, that $Cw^\star = Cw_0 = u$ a.s. since $C$ is positive semi-definite.
In addition, as $\widehat C_n$ is also positive semi-definite, so we can rewrite \eqref{ident3} as
\begin{equation}\label{ident4}
R(\widehat w_n)  \leq R(w_0) + \dotp{\widehat w_n - w_0}{\frac{\varepsilon_n}{\lambda_n}}.
\end{equation}
Passing to the limit in \eqref{ident4}, using lower-semicontinuity of $R$ and that $\varepsilon_n / \lambda_n = o(1)$ a.s., we arrive at 
\begin{equation*}
R(w^\star) \leq \liminf_n R(\widehat w_n) \leq \limsup_n R(\widehat w_n) \leq R(w_0) \quad a.s.
\end{equation*}
Clearly $R(w^\star) \leq R(w_0)$ and $w^\star$ obeys the constraint $Cw^\star = u$, which implies that $w^\star$ is a solution of \eqref{Eq:primal expected problem} a.s.
But since this problem has a unique solution, $w_0$, by assumption~\eqref{H:hypothesis on model}, we conclude that $w^\star=w_0$ a.s.
This being true for any a.s. cluster point means that $\widehat w_n \to w_0$ as $n \to +\infty$ a.s.

\paragraph{Convergence of the dual variable.}

Here we omit systematically mentioning  that the bounds and convergence we obtain hold almost surely.

It can be verified, using for instance \citep[Proposition~13.23 \& Theorem~15.27]{BauComv2}, that the Fenchel dual problem of~\eqref{eq-empirical-risk-bis} is 
\begin{equation}\label{E:dual empirical penalized pb}
\ens{\widehat \eta_{\lambda,n}} \eqdef \uArgmin{\eta \in \Im \widehat C_n} R^*(\eta) + \frac{\lambda}{2} \langle \widehat C_n^\dagger \eta ,\eta \rangle - \langle \widehat C_n^\dagger \widehat u_n , \eta \rangle.
\end{equation}
For any fixed $\lambda >0$, we also introduce its limit problem\footnote{By Lemma~\ref{L:properties of C_n in stochastic}, we indeed have $C_n^\dagger \to C^\dagger$ a.s. under our hypotheses.}, as $n \to +\infty$ (which is the dual of \eqref{eq-limitting-risk-lagr}):
\begin{equation}\label{E:dual expected penalized pb}
\ens{\eta_{\lambda}} \eqdef \uArgmin{\eta \in \Im  C} R^*(\eta) + \frac{\lambda}{2} \langle  C^\dagger \eta ,\eta \rangle - \langle  C^\dagger  u , \eta \rangle.
\end{equation}
Both problems are strongly convex thanks to positive semi-definiteness of $\widehat C_n$ and $C$, hence uniqueness of the corresponding dual solutions $\widehat \eta_{\lambda,n}$ and $\eta_{\lambda}$. Moreover, from the primal-dual extremality relationships, see \citep[Proposition~26.1.iv.b]{BauComv2}, $\widehat \eta_{\lambda,n}$ and $\eta_{\lambda}$ can be recovered from the corresponding primal solutions as 
\begin{equation}\label{ident5}
\widehat \eta_{\lambda,n} \eqdef \frac{\widehat u_n - \widehat C_n \widehat w_{\lambda,n}}{\lambda} \qandq \eta_\lambda \eqdef \frac{u - C w_\lambda}{\lambda}.
\end{equation}

In what follows, we  prove  that $\widehat \eta_n$  converges to $\eta_0$ when $n \to +\infty$.
To lighten notation, we will denote $r_n\eqdef\max\{\Vert \widehat u_n - u \Vert , \Vert \widehat C_n - C \Vert \}$, and note $\widehat \eta_n = \widehat \eta_{\lambda_n,n}$.
We have
\begin{equation}\label{ident5.1.1}
\norm{\widehat \eta_n -\eta_0} \leq \norm{\widehat \eta_n - \eta_{\lambda_n}} + \norm{\eta_{\lambda_n} - \eta_0}.
\end{equation}
By using \eqref{ident5} and the definition of $r_n$, we  write
\begin{eqnarray*}
\norm{\widehat \eta_n - \eta_{\lambda_n}}  & = &
\normb{\frac{\widehat u_n - u }{\lambda_n} + \frac{C w_{\lambda_n} - \widehat C_n \widehat w_n}{\lambda_n}} \\
& \leq & O\pa{\frac{r_n}{\lambda_n}} + \normb{\frac{C w_{\lambda_n} - \widehat C_n \widehat w_n}{\lambda_n}}.
\end{eqnarray*}
The second term on the right hand side can also be bounded as
\begin{eqnarray*} 
\normb{\frac{C w_{\lambda_n} - \widehat C_n \widehat w_n}{\lambda_n}} & = &
\normb{\frac{C (w_{\lambda_n} -  \widehat w_n)}{\lambda_n}  + 
 \frac{ C \widehat w_n - \widehat C_n \widehat w_n}{\lambda_n}} \\
& \leq & \normb{\frac{C(w_{\lambda_n} -  \widehat w_n)}{\lambda_n}} + \norm{\widehat w_n} \frac{r_n }{\lambda_n} \\ 
 & = & O\pa{\frac{r_n}{\lambda_n}} ,
\end{eqnarray*}
where we used Lemma~\ref{L:estimate primal pb}, and Lemma~\ref{L:boundedness primal pb} with Lemma \ref{L:properties of C_n in stochastic} in the last inequality.
%We can now use Lemma \ref{L:estimate primal pb}, together with Lemma \ref{L:boundedness primal pb}, to deduce that
%\begin{equation*}
%\Vert C w_{\lambda_n} -  C \widehat w_n \Vert
%= O(n^{-1/2}). 
%\end{equation*}
Combining the above inequalities with the fact that $r_n=o(\lambda_n)$ by Lemma \ref{L:properties of C_n in stochastic}.\ref{L:properties of C_n in stochastic:speed}, we obtain
\begin{equation}\label{ident5.1}
\norm{\widehat \eta_n - \eta_{\lambda_n}} = O\left(\frac{r_n}{\lambda_n} \right) \overset{n \to +\infty}{\longrightarrow} 0 .
\end{equation}
It remains now to prove that $ \eta_{\lambda}$ converges to $\eta_0 $ when $\lambda \to 0$.
%By using optimality conditions, we see that the constraint {$\eta \in \argmin R^* - \langle C^\dagger u , \cdot \rangle$} is equivalent to $\eta \in \partial R(C^\dagger u )$.
To do so, we start by using optimality of $ \eta_\lambda$ and $\eta_0$ for problems \eqref{E:dual expected penalized pb} and \eqref{E:dual expected pb}, together with Lemma \ref{L:limit solutions belong to partial R}, to write
\begin{eqnarray}
& &  R^*(\eta_\lambda) + \frac{\lambda}{2} \langle C^\dagger \eta_\lambda,\eta_\lambda \rangle - \langle C^\dagger u, \eta_\lambda \rangle \label{ident7}  \\
& \leq & R^*(\eta_0) + \frac{\lambda}{2} \langle C^\dagger \eta_0,\eta_0 \rangle - \langle C^\dagger u, \eta_0 \rangle \nonumber \\
 & \leq & R^*(\eta_\lambda) + \frac{\lambda}{2} \langle C^\dagger \eta_0,\eta_0 \rangle - \langle C^\dagger u, \eta_\lambda \rangle \nonumber,
\end{eqnarray}
from which we deduce that
\begin{equation}\label{ident8}
\langle C^\dagger \eta_\lambda,\eta_\lambda \rangle \leq \langle C^\dagger \eta_0,\eta_0 \rangle.
\end{equation}
Since $\eta_\lambda \in \Im C = (\ker C^\dagger) ^\perp$ (see \eqref{E:dual expected penalized pb}), we can infer from \eqref{ident8} that $(\eta_\lambda)_{\lambda >0}$ is bounded.
Let $\eta^\star$ be any cluster point of this net, and let us verify that $\eta^\star$ must be equal to $\eta_0$.
First, passing to the limit in \eqref{ident8} shows that
\begin{equation}\label{ident8-1}
\langle C^\dagger \eta^\star,\eta^\star \rangle \leq \langle C^\dagger \eta_0,\eta_0 \rangle.
\end{equation}
Second, taking the limit in \eqref{ident7} and using lower semi-continuity of $R^*$, we get
\begin{eqnarray}\label{ident9}
& & R^*(\eta^\star) - \langle C^\dagger u , \eta^\star \rangle \\
 & \leq & \liminf_{\lambda \to 0} R^*(\eta_\lambda) + \frac{\lambda}{2} \langle C^\dagger \eta_\lambda,\eta_\lambda \rangle - \langle C^\dagger u, \eta_\lambda \rangle \hspace*{1cm} \nonumber \\
&  \leq &  \lim_{\lambda \to 0} R^*(\eta_0) + \frac{\lambda}{2} \langle C^\dagger \eta_0,\eta_0 \rangle - \langle C^\dagger u, \eta_0 \rangle \nonumber \\
& = & R^*(\eta_0) - \langle C^\dagger u, \eta_0 \rangle.\nonumber
\end{eqnarray}
From $\eta_\lambda \in \Im C$ we know that $\eta^\star \in \Im C$ as well, so we can then deduce from \eqref{ident9} and Lemma \ref{L:limit solutions belong to partial R}  that 
\begin{equation}\label{ident10}
\eta^\star \in \partial R(w_0) \cap \Im C.
\end{equation}
Putting together \eqref{ident8-1} and \eqref{ident10} shows that $\eta^\star$ is a solution of \eqref{E:dual expected pb}, hence $\eta^\star=\eta_0$ by uniqueness of $\eta_0$. This being true for any cluster point shows convergence of $\eta_\lambda$ to $\eta_0$.

\subsection{Proof of Proposition \ref{P:cv algo}  }\label{sec-proof-algo}

We use here the notations $h_n$ and $\widehat \xi^k$ introduced in Section \ref{S:sketch proof}, and we read directly from hypothesis \eqref{H:hypothesis on algorithm} that 
$\widehat{d}^k = \nabla h_n(\widehat{w}^k) + \widehat{\xi}^k$,  $\EE\left[\widehat{\xi}^k | \Ff_k \right] = 0, \ \EE\left[ \Vert \widehat{\xi}^k \Vert^2 | \Ff_k \right] \leq \sigma_k^2 \text{ and } \widehat{\xi}^k \text{ converges a.s. to } 0.$

Let us start by showing that $\widehat w^k$ converges to $\widehat w_{\lambda_n,n}$.
For this, let $w$ be any solution of \eqref{eq-empirical-risk-bis}.
We can write, using standard identities (e.g.~\cite[Corollary~2.14]{BauComv2}), that
\begin{eqnarray}\label{tis5}
& & \norm{\widehat w^{k+1} - w}^2 \\ 
&=& \norm{(1-\alpha_k)(\widehat w^k - w) + \alpha_k (\widehat z^k - w)}^2 \nonumber\\
\hspace*{-20cm}&=& (1-\alpha_k)\norm{\widehat w^k - w}^2 + \alpha_k\norm{\widehat z^k - w}^2 \nonumber \\
& & - \alpha_k(1-\alpha_k)\norm{\widehat z^k-\widehat w^k}^2 . \nonumber
\end{eqnarray}
%\begin{equation*}
%\quad \quad \quad \quad \quad \  \leq   (1-\alpha_k) \Vert\widehat w^k - w \Vert^2 +  \alpha_k \Vert \widehat z^k - w \Vert^2. \nonumber
%\end{equation*}
Since $w$ is a  solution of \eqref{eq-empirical-risk-bis}, it is a fixed point for the operator $\Prox_{\lambda_n \gamma_k R} \circ (\Id - \gamma_k \nabla h_n)$ for any $k \in \NN$.
Use then the definition of $\widehat z^k$ together with the nonexpansiveness
%\footnote{Firm non expansivity would be sharper? \todo{Jalal: Yes, definitely.}}
 of the proximal mapping to obtain
\begin{align*}%\label{tis6}
\norm{\widehat z^k - w}^2
&\leq \norm{\widehat w^k - w + \gamma_k(\nabla h_n(w) - \nabla h_n(\widehat w^k) - \xi_k)}^2 \nonumber \\
&\leq \norm{\widehat w^k - w \Vert^2 + \gamma_k^2 \Vert \nabla h_n(w) - \nabla h_n(\widehat w^k) - \xi_k}^2 \nonumber \\
& \hspace*{0.3cm} + 2 \gamma_k \dotp{\widehat w^k - w}{\nabla h_n(w) - \nabla h_n(\widehat w^k) - \xi_k}. \nonumber
\end{align*}
Taking the conditional expectation w.r.t. $\mathcal{F}_k$ in the above inequality, and using the assumptions $\EE(\xi_k | \mathcal{F}_k) = 0$ and $\EE(\Vert \xi_k \Vert^2 | \mathcal{F}_k) \leq \sigma_k^2$, leads to 
\begin{eqnarray*}
& & \EE(\norm{\widehat z^k - w}^2 | \mathcal{F}_k) \\
&\leq & \norm{\widehat w^k - w}^2 + \gamma_k^2 \norm{\nabla h_n(w) - \nabla h_n(\widehat w^k)}^2 + \gamma_k^2 \sigma_k^2 \nonumber \\
& & \hspace*{0cm}+  2 \gamma_k \dotp{\widehat w^k - w}{\nabla h_n(w) - \nabla h_n(\widehat w^k)}.
\end{eqnarray*}
Since $\nabla h_n$ is $1/L$-cocoercive, we obtain
\[\begin{array}{lll}
&& \EE(\norm{\widehat z^k - w}^2 | \mathcal{F}_k) \\
&\leq & \norm{\widehat w^k - w}^2 + \gamma_k^2 \sigma_k^2 \\
& & - \gamma_k(2/L - \gamma_k)\norm{\nabla h_n(w) - \nabla h_n(\widehat w^k)}^2
%& \leq & \norm{\widehat w^k - w}^2 + \gamma_k^2 \sigma_k^2. \nonumber
\end{array}\]
After taking the conditional expectation in \eqref{tis5} and combining with the last inequality, we obtain
\begin{eqnarray*}
& & \EE(\norm{\widehat w^{k+1} - w }^2| \mathcal{F}_k) \\
&\leq &  \norm{\widehat w^k - w}^2  +  \alpha_k \gamma_k^2 \sigma_k^2 \\
& & - \gamma_k(2/L - \gamma_k)\norm{\nabla h_n(w) - \nabla h_n(\widehat w^k)}^2 \\
& & - \alpha_k(1-\alpha_k)\EE(\norm{\widehat z^k-\widehat w^k}^2| \mathcal{F}_k) .
%&& - \infg(2/L - \supg)\norm{\nabla h(w) - \nabla h(\widehat w^k)}^2 - \alpha_k(1-\alpha_k)\norm{\widehat z^k-\widehat w^k}^2 .
\end{eqnarray*}
The inequality above means that $(\widehat w^k)_{k\in\NN}$ is a stochastic quasi-F\'ejer sequence, and hypothesis \eqref{H:hypothesis on algorithm} allows us to use invoke \cite[Proposition~2.3]{CombettesStochastic15}, from which we deduce that 
%
%
%\[
%\sum_{k \in \NN} \gamma_k(2/L - \gamma_k)\norm{\nabla h(w) - \nabla h(\widehat w^k)}^2 < +\infty \qandq \sum_{k \in \NN} \alpha_k(1-\alpha_k)\EE(\norm{\widehat z^k-\widehat w^k}^2| \mathcal{F}_k) < +\infty \ a.s.
%\]
%Observe that this entails $\nabla h(\widehat w^k) \to \nabla h(w)$ a.s whenever $\gamma_k$ is bounded away from $0$ and $2/L$, and $\widehat z^k-\widehat w^k \to 0$ a.s. if bounded away from $0$ and $1$. On the other hand, 
%
%
$(\widehat w^k)_{k \in \NN}$ is bounded a.s. 
Thus $\widehat w^k$ has a cluster point. 
Let $\bar{w}$ be a sequential cluster point of $(\widehat w^k)_{k \in \NN}$, and $\widehat w^{k}$ be a subsequence (that we do not relabel for simplicity) that converges a.s. to $\bar{w}$. Recalling \eqref{tis4} and \eqref{tis2}, and in view of assumption~\eqref{H:hypothesis on algorithm} and continuity of the gradient, we deduce that
%Thus $\nabla h(\widehat w^k) \to \norm{\nabla h(w)$ and maximal monotonicity of $\nabla h$ force $\nabla h(\bar{w})=\nabla h(w)$ a.s. 
\[
\widehat v^k \to -\nabla h_n(\bar{w}) \qandq \widehat z^k \to \bar{w} \quad a.s.
\]
Since $(\widehat z^k,\widehat v^k) \in \Gr(\lambda_n\partial R)$ and $\lambda_n\partial R$ is maximally monotone, we conclude that $0 \in \nabla h_n(\bar{w}) + \lambda_n\partial R(\bar{w})$, i.e., $\bar{w}$ is minimizer of \eqref{eq-empirical-risk-bis}. Since this is true for any cluster point, we invoke \cite[Proposition~2.3(iv)]{CombettesStochastic15} which yields that $\widehat w^k$ converges a.s. to a minimizer of \eqref{eq-empirical-risk-bis}.
Using again \eqref{tis2}, we see that $\widehat{z}^k$ converges a.s. to this same minimizer.

%$\widehat w^k$ converges to a solution of \eqref{eq-empirical-risk-bis}.

\end{document}